\theoremstyle{plain}
\newtheorem{thm}{Theorem}[section]
\newtheorem*{thm*}{Theorem}
\newaliascnt{prop}{thm}
\newaliascnt{cor}{thm}
\newaliascnt{lem}{thm}
\newaliascnt{claim}{thm}
\newaliascnt{defn}{thm}
\newaliascnt{ques}{thm}
\newaliascnt{conj}{thm}
\newaliascnt{fact}{thm}
\newaliascnt{rem}{thm}
\newaliascnt{ex}{thm}
\newtheorem{prop}[prop]{Proposition}
\newtheorem{cor}[cor]{Corollary}
\newtheorem{lem}[lem]{Lemma}
\newtheorem{claim}[claim]{Claim}
\newtheorem*{prop*}{Proposition}
\newtheorem*{cor*}{Corollary}
\newtheorem*{lem*}{Lemma}
\newtheorem*{claim*}{Claim}
\theoremstyle{definition}
\newtheorem{defn}[defn]{Definition}
\newtheorem*{defn*}{Definition}
\newtheorem*{ques*}{Question}
\newtheorem*{conj*}{Conjecture}
\newtheorem*{prob*}{Problem}
\newtheorem{rem}[rem]{Remark}
\newtheorem{ex}[ex]{Example}
\newtheorem*{fact*}{Fact}
\newtheorem*{rem*}{Remark}
\newtheorem*{ex*}{Example}
\def\textsectionN~{\textsection{}}
\newcommand{\aij}{a_i^j}
\newcommand{\sU}{\mathcal{U}}
\newcommand{\Go}{\mathbb{G}\spcirc}
\newcommand{\Xo}{X\spcirc}
\newcommand{\sQ}{\mathscr{Q}}
\newcommand{\sS}{\mathscr{S}}
\newcommand{\sHom}{\mathscr{H}om}
\newcommand\Ri{0 \leq i \leq n}
\newcommand\Rk{1 \leq k \leq n}
\newcommand\Rj{n+1 \leq j \leq N}
\newcommand\Rij{\Ri,\Rj}
\newcommand{\Zo}{{Z\spcirc}}
\renewcommand\phi{\varphi}
\renewcommand\epsilon{\varepsilon}
\renewcommand\leq{\leqslant}
\renewcommand\geq{\geqslant}
\newcommand{\set}{%
  \@ifstar{\@setstar}{\@set}%
}%
\newcommand{\@setstar}[2]{\{\, #1 \mid #2 \,\}}
\newcommand{\@set}[1]{\{ #1 \}}
\newcommand{\trans}[1][1]{\raisebox{#1ex}{\scriptsize\kern0.1em$t$\kern-0.1em}}
\newcommand{\PP}{\mathbb{P}}
\newcommand{\cP}{{\PP_{\!\! *}}}
\newcommand{\TT}{\mathbb{T}}
\newcommand{\PN}{\PP^N}
\newcommand{\Pn}{\PP^n}
\newcommand{\GN}{\Gr(n, \PP^N)}
\newcommand{\A}{\mathbb{A}}
\newcommand{\sO}{\mathscr{O}}
\newcommand{\textgene}[1]{\ \ \text{#1}\ \,}
\newcommand{\textand}{\textgene{and}}
\DeclareMathOperator{\Frob}{Frob}
\DeclareMathOperator{\rk}{rk}%
\DeclareMathOperator{\Hom}{Hom}%
\DeclareMathOperator{\Spec}{Spec}
\newcommand{\Gr}{\mathbb{G}}
\newcommand\spcirc{^\circ}
\newcommand{\tdiff}[2]{{\partial #1}/{\partial #2}}
\newcommand{\diff}{\tdiff}
\newcommand{\kk}{\Bbbk}
\DeclareMathOperator{\rank}{rk}
\DeclareMathOperator{\id}{id}
\DeclareMathOperator{\pr}{pr}
\DeclareMathOperator{\chara}{char}
\DeclareMathOperator{\bir}{bir}
\def\Q{\mathbb{Q}}
\def\R{\mathbb{R}}
\def\C{\mathbb{C}}
\def\A{\mathbb{A}}
\def\r+{\mathbb{R}_{\geq 0}}
\def\ep{\varepsilon}
\def\r+{{\R}_{\geq 0}}
\def\q+{{\Q}_{\geq 0}}
\def\P{\mathbb{P}}
\def\arw{\rightarrow}
\def\*c{\C^{\times}}
\def\trdeg{\mathrm{tr.deg}}
\def\A{\mathbb {A}}
\def\C{\mathbb {C}}
\def\G{\mathbb {G}}
\def\Q{\mathbb {Q}}
\def\R{\mathbb {R}}
\def\T{\mathbb {T}}
\newcommand{\calf}{\mathcal {F}}
\newcommand{\calg}{\mathcal {G}}
\newcommand{\calu}{\mathcal {U}}
\title[On Gauss maps in positive characteristic]{On Gauss maps in positive characteristic\\
  in view of images, fibers, and field extensions
}%
\author[K.~Furukawa]{Katsuhisa~FURUKAWA}
\address{
Department of Mathematics, Center for Advanced Studies in %
Theoretical Sciences, National Taiwan University, %
Taipei, 
Taiwan}
\email{katu@tims.ntu.edu.tw}
\author[A.~Ito]{Atsushi~Ito}
\address{Department of Mathematics, 
Kyoto University,
Kyoto, Japan}
\email{aito@math.kyoto-u.ac.jp}
\subjclass[2010]{14N05}
\keywords{Gauss map}
\begin{document}

\maketitle

\begin{abstract}
  The Gauss map of a projective variety $X \subset \PN$
  is a rational map from $X$ to a Grassmann variety.   In positive characteristic, we show the following results.
  (1) For given projective varieties $F$ and $Y$, we construct a projective variety $X$ whose Gauss map has
  $F$ as its general fiber and has $Y$ as its image.
  More generally, we give such construction for families of varieties over $Y$ instead of fixed $F$.
  (2)  At least in the case when the characteristic is not equal to $2$,
  any inseparable field extension
  appears as the extension induced from the Gauss map of some $X$.
\end{abstract}

\section{Introduction}
\label{sec:introduction}

Let $X \subset \PN$ be an $n$-dimensional projective variety over an algebraically closed field $\Bbbk$ of arbitrary characteristic $p \geq 0$.
The \emph{Gauss map} $\gamma$ of $X$
is defined as the rational map
$\gamma = \gamma_X: X \dashrightarrow \Gr(n, \PN)$
which maps each smooth point $x \in X$ to the embedded tangent space $\TT_xX$ to $X$ at $x$ in $\PN$.

J.~M.~Landsberg and J.~Piontkowski independently
characterized \emph{images} of Gauss maps in characteristic zero;
they gave an equivalent condition for a subvariety of a Grassmann variety $\GN$
to be the image of some Gauss map $\gamma$
(see \cite[2.4.7]{FP} and \cite[Theorem~3.4.8]{IL}).
As a generalization, a characterization of images of \emph{separable} Gauss maps was given
in positive characteristic \cite[Theorem~1.2]{expshr} (see also \autoref{thm:char-sepa}).
From these results,
a subvariety $Y \subset \GN$ with $\dim Y \leq n$ cannot be the image of a separable Gauss map
in general, and if $Y$ is the image of separable $\gamma_X$ for some $X \subset \PN$, then such $X$ is uniquely determined by $Y$.

On the other hand, the following was known about images of \emph{inseparable} Gauss maps of curves.
A.~H.~Wallace \cite[\textsection{}7]{Wallace},
S.~L.~Kleiman and A.~Thorup \cite[I-3]{Kleiman86}
showed that,
for any plane curve $Y \subset (\PP^2)\spcheck = \Gr(1, \PP^2)$ in $p > 0$
and for any positive integers $s$ and $r$,
there exists a curve $X \subset \PP^2$
such that
$\overline{\gamma_X(X)} =Y$
and the separable and inseparable degrees of $\gamma_X$ are $s$ and $p^r$
(in particular, there are infinitely many $X$'s such that $\overline{\gamma_X(X)} =Y$ for fixed $Y$ by taking various degrees).
More generally,
H.~Kaji \cite[\textsection{}4, Proposition]{Kaji1993} showed that,
for any curve $Y \subset \G(1,\PP^N)$ in $p > 0$ and for any finite inseparable field extension $L / K(Y)$,
there exists a curve $X \subset \PP^N$ with $K(X)=L$ such that
$\overline{\gamma_X(X)} =Y$ and the extension $K(X)/K(Y)$ induced by $\gamma_X$ coincides with the given $L /K(Y)$.

We investigate higher-dimensional subvarieties of $\GN$.
Throughout this paper,
by a \emph{field} we shall mean a finitely generated field over $\kk$.
For a field extension $L/K$, we denote by $\delta_{L/K}$ the natural $L$-linear map
$\Omega_{K/\kk} \otimes_{K} L \rightarrow \Omega_{L/\kk}$.

\begin{thm}\label{thm:anyY}
  Assume $p = \chara\kk > 0$.
  Let $Y \subset \G(n,\PP^N)$ be a projective subvariety with $1 \leq \dim Y \leq n$.
  For any field extension $L / K(Y)$ such that $\trdeg_{\kk} L =n$ and $\delta_{L/K(Y)}$ is the zero map,
  there exists an $n$-dimensional subvariety $X \subset \PP^N$
  with $K(X)=L$ such that $\overline{\gamma_X(X)} =Y$ holds and the extension $K(X)/ K(Y)$ induced by $\gamma_X$ coincides with the given $L / K(Y)$.

  In particular, any $Y$ appears as the image of the Gauss map of some $X \subset \PN$,
  and there are infinitely many choices of such $X$
  (by taking various extensions $L$'s over $K(Y)^{1/p}$).
\end{thm}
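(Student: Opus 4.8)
The plan is to realize $X$ as the image of an explicit rational map $\Spec L \dashrightarrow \PN$ whose homogeneous coordinates are assembled from the tautological family over $Y$, choosing the parametrization so that its first-order data reproduce the planes parametrized by $Y$. First I would reformulate the hypothesis: since $\kk$ is perfect, $\ker(d\colon L\to\Omega_{L/\kk})=L^p$, so $\delta_{L/K(Y)}=0$ is equivalent to $K(Y)\subseteq L^p$; concretely, every $\partial\in\Der_\kk(L)$ kills $K(Y)$. Let $\eta$ be the generic point of $Y$; it corresponds to an $(n+1)$-dimensional subspace $V_\eta\subseteq K(Y)^{N+1}$ with $\PP(V_\eta)=\Lambda_\eta$ the generic plane encoded by $Y$. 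After a linear change of coordinates on $\PN$ (harmless, as $\gamma$ is $\mathrm{PGL}_{N+1}$-equivariant, so the final $X$ can be transported back) I may assume $V_\eta$ has a basis $\mathbf v_i=e_i+\sum_{j=n+1}^N a_{ij}e_j$ $(0\le i\le n)$ with $a_{ij}\in K(Y)$ and $K(Y)=\kk(a_{ij})$. Setting $V_L=V_\eta\otimes_{K(Y)}L$, I would consider $\mathbf f=\mathbf v_0+\sum_{i=1}^n h_i\mathbf v_i$ with $h_1,\dots,h_n\in L$ to be chosen (so $f_i=h_i$ for $i\le n$ with $h_0:=1$, and $f_j=\sum_i h_i a_{ij}$ for $j>n$), and let $X\subseteq\PN$ be the closure of the image of $[\mathbf f]$.

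The point of $K(Y)\subseteq L^p$ is that it keeps the tangent directions inside $\Lambda_\eta$: for any $\partial\in\Der_\kk(L)$ one has $\partial\mathbf v_i=0$, hence $\partial\mathbf f=\sum_i(\partial h_i)\mathbf v_i\in V_L$. I would take $h_i=\theta_i$ where $\theta_1,\dots,\theta_n$ is a $p$-basis of $L/\kk$, with $\partial_1,\dots,\partial_n$ the dual basis of $\Der_\kk(L)$, so $\partial_k\theta_i=\delta_{ki}$. Then the coordinate matrix of $\mathbf f,\partial_1\mathbf f,\dots,\partial_n\mathbf f$ in the basis $\mathbf v_0,\dots,\mathbf v_n$ is block triangular with determinant $1$; thus these $n+1$ vectors span $V_L$, so the parametrization $[\mathbf f]$ is generically unramified, $\dim X=n$, and the embedded tangent space at the general point equals $\PP(V_L)=\Lambda_\eta$. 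Consequently $\gamma_X$ carries the generic point of $X$ to $\eta$, so $\overline{\gamma_X(X)}=Y$ and $\gamma_X^*$ reproduces the given inclusion $K(Y)=\kk(a_{ij})\hookrightarrow L$. Finally, every $f_c$ is a $K(Y)$-combination of the $\theta_i$, while $K(Y)\subseteq K(X)$ through the dominant $\gamma_X$; hence $K(X)=K(Y)(\theta_1,\dots,\theta_n)$, and this equals $L$, giving the induced extension $L/K(Y)$, exactly when the $\theta_i$ generate $L$ over $K(Y)$.

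Everything thus reduces to an algebraic lemma, which I expect to be the main obstacle: if $K(Y)\subseteq L^p$ and $\trdeg_\kk L=n$, there is a $p$-basis $\theta_1,\dots,\theta_n$ of $L/\kk$ with $K(Y)(\theta_1,\dots,\theta_n)=L$. Since $\delta_{L/K(Y)}=0$ gives $\Omega_{L/K(Y)}\cong\Omega_{L/\kk}$ of rank $n$, any $p$-basis already makes $L/K(Y)(\theta)$ finite separable (the $d\theta_i$ span $\Omega_{L/K(Y)}$, so $\Omega_{L/K(Y)(\theta)}=0$); the content is to choose $\theta$ so that this separable residue is trivial. The mechanism I would exploit is that an element which is simultaneously separable and purely inseparable over a subfield lies in it: because $K(Y)\subseteq L^p$, elements with $p$-th power in $K(Y)$ are purely inseparable over $K(Y)(\theta)$, hence absorbed once they are also separable over it. The delicate step — where a naive or ``generic'' choice can fail, as the example $L=\kk(s,t,\omega)$ with $\omega^p=f(s,t)$ and $K(Y)=\kk(s^p)$ shows (the $p$-basis $\{s,\omega\}$ does not generate $L$ over $K(Y)$, whereas $\{t,\omega\}$ does) — is to produce one $p$-basis that is simultaneously separating for $L/\kk$ and adapted to the inseparable structure of $L/K(Y)$. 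I would build it from a separating transcendence basis of $K(Y)/\kk$ together with chosen $p$-th roots and transcendentals, then delete superfluous generators one at a time using the absorption mechanism and the rank count $\dim_L\Omega_{L/K(Y)}=n$.

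Granting the lemma, the final ``in particular'' statement is immediate: any $L$ with $K(Y)^{1/p}\subseteq L$ and $\trdeg_\kk L=n$ satisfies $K(Y)=(K(Y)^{1/p})^p\subseteq L^p$ and hence the hypotheses, so at least one $X$ exists (for instance $L=K(Y)^{1/p}(z_1,\dots,z_{n-m})$ with $z_i$ new transcendentals, $m=\dim Y$), and letting $L$ range over the infinitely many distinct such extensions of $K(Y)^{1/p}$ yields infinitely many distinct $X$ with $\overline{\gamma_X(X)}=Y$.
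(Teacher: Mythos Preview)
Your approach is essentially the paper's. Both arguments construct $X$ by parametrizing $[1:\theta_1:\cdots:\theta_n:\sum_i\theta_i a_{ij}:\cdots]$ with $\theta_1,\dots,\theta_n$ a separating transcendence basis of $L/\kk$ that also generates $L$ over $K(Y)$, and both identify this generation property as the crux. The paper packages the tangent-space verification through its Theorem~2.3 (the graph of $\gamma_X$ is characterized by the vanishing of $\sO_{\sU}(-1)\to\sHom(T,\sS\spcheck)$, which holds automatically when $\pr_1$ has rank~$0$), whereas you compute the embedded tangent space by hand from $\partial\mathbf v_i=0$; these are the same computation in different clothing, and your reformulation $\delta_{L/K(Y)}=0\Leftrightarrow K(Y)\subset L^p$ is exactly what makes the rank-zero condition transparent.

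The one place your sketch is thinner than the paper is the algebraic lemma itself. The paper proves it (Lemma~3.2) by a short primitive-element trick: start from any separating transcendence basis $x_1,\dots,x_n$ of $L/\kk$, renumber so that $x_{m+1},\dots,x_n$ is a transcendence basis of $L/K$, set $K'=K(x_{m+1},\dots,x_n)$, observe $L=L^{p^e}(x_1,\dots,x_n)\subset K'_{\mathrm{sep}}(x_1,\dots,x_m)$ for $e\gg0$, write $K'_{\mathrm{sep}}=K'(w)$, and replace $x_m$ by $\tilde x_m=w+tx_m$ for generic $t\in\kk$ so that $K'(w,x_m)=K'(\tilde x_m)$ while $dx_1,\dots,d\tilde x_m,\dots,dx_n$ remain a basis of $\Omega_{L/\kk}$. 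Your ``absorption'' idea only shows $K(Y)^{1/p}\subset K(Y)(\theta)$, which is weaker than $L=K(Y)(\theta)$; the inductive deletion you gesture at would still need an argument of this primitive-element flavor to terminate. Once you plug in a proof of the lemma along these lines, your write-up is complete and matches the paper's.
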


When $Y$ is a curve,
$\delta_{L/K(Y)} = 0$ holds if and only if $L/K(Y)$ is inseparable.
Hence the above Kaji's result
is nothing but the case when $n=\dim Y =1$ in \autoref{thm:anyY}.

\vspace{2mm}
In \autoref{thm:anyY},
we assume $\delta_{L/K(Y)} =0$ for the field extension $L/K(Y)$.
Then, how about the case when $\delta_{L/K(Y)} \neq 0$?
If $L/K(Y)$ is inseparable but $\delta_{L/K(Y)} \neq 0$,
the statement in \autoref{thm:anyY} does not hold in general,
even if $L/K(Y)$ is finite (see \autoref{ex_Y_with_r>1}).

On the other hand,
S.~Fukasawa and Kaji \cite{fukaji2010} showed that
for any field $L$ and any integer $0 \leq r \leq n:=\trdeg_{\kk} L$
($r \neq 1$ if $p=2$), there exists 
a projective variety $X \subset \PN$ with $K(X) = L$ such that
the rank of the Gauss map $\gamma$
(i.e., the rank of the $L$-linear map $\delta_{L/K(\gamma(X))}$ for the extension $L/K(\gamma(X))$ induced by $\gamma$)
is equal to the given integer $r$.

We refine their result at least in the case when $p \geq 3$. In the following theorem,
we construct a projective variety $X$ whose Gauss map induces a given inseparable extension $L/K$,
not only has a given rank.

\begin{thm}\label{thm_construction_for_insep}
  Let $L / K$ be an inseparable field extension
  with $\trdeg_{\kk} K \geq 1$
  and $p >0$,
  and let $N$ be an integer with $N > n:= \trdeg_{\kk}L$.
  Assume that $p \geq 3$ or $\rk_L (\delta_{L/K})$ is even. Then
  there exists 
  an $n$-dimensional non-degenerate projective variety $X \subset \PP^{N}$
  with $K(X)=L$ such that 
  the extension $K(X) / K(\gamma_X(X))$ induced by $\gamma_X$ coincides with the given $L/K$.
\end{thm}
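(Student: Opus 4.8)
The plan is to realize $L/K$ by an explicit parametric construction and to read off the two function fields from a parametrization together with its partial derivatives. If $\rk_L(\delta_{L/K})=0$ then $K\subseteq L^p$, and the statement is already contained in \autoref{thm:anyY} (choose any projective $Y\subseteq\Gr(n,\PN)$ with $K(Y)\cong K$, which exists since $1\le\trdeg_\kk K\le n$); so I assume $r:=\rk_L(\delta_{L/K})\ge 1$. The first step is a normal form for the extension. Since $\kk$ is perfect, $L$ is separably generated; treating first the case in which $L$ is rational, I would look for a transcendence basis with $L=\kk(a_1,\dots,a_n)$ such that, with $m:=\trdeg_\kk K$ and suitable exponents $e_i\ge1$,
\[
 K=\kk\bigl(a_1,\dots,a_r,\ a_{r+1}^{p^{e_{r+1}}},\dots,a_{m}^{p^{e_m}}\bigr)\subseteq\kk(a_1,\dots,a_n)=L .
\]
Here $a_1,\dots,a_r$ are the separable, rank-carrying directions, $a_{r+1},\dots,a_m$ the purely inseparable ones (a $p$-power generator lies in $L^p$ and so contributes nothing to $\rk\delta$), and $a_{m+1},\dots,a_n$ span the general fibre of $\gamma_X$.

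Second, given this normal form I would realize $L/K$ by a single hypersurface: let $X_0\subset\PP^{n+1}$ be the closure of the image of $\phi\colon(a_1,\dots,a_n)\mapsto[1:a_1:\cdots:a_n:f]$ with $f=\tfrac12\sum_{k=1}^{r}a_k^2+\sum_{i=r+1}^{m}a_i^{p^{e_i}+1}$. A direct computation shows that the tangent hyperplane at a general point has slopes $\partial f/\partial a_k$ and intercept $f-\sum_k a_k\,\partial f/\partial a_k$; since $\partial f/\partial a_k=a_k$ for $k\le r$, $\partial f/\partial a_i=(p^{e_i}+1)a_i^{p^{e_i}}=a_i^{p^{e_i}}$ for $r<i\le m$, and $\partial f/\partial a_j=0$ for $j>m$, the Gauss field of $X_0$ is exactly $K$, while $K(X_0)=L$ and $\gamma_{X_0}$ induces the given $L/K$. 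To land in the prescribed $\PN$ non-degenerately for an arbitrary $N>n$, I would then adjoin further coordinates taken from $K\cap L^p\supseteq K^p$ (for instance monomials in the $a_i^{p^{e_i}}$): these have vanishing differential, so they enlarge neither $K(X)=L$ nor the Gauss field, and they can be chosen $\kk$-linearly independent so that $X$ spans $\PN$. The general, non-rational case is handled by carrying a separable primitive element of $L$ over $\kk(a_1,\dots,a_n)$ as an additional coordinate.

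The characteristic hypothesis enters only through the rank-carrying term of $f$. To obtain $a_k$ itself as a slope $\partial f/\partial a_k$ one uses the square $\tfrac12 a_k^2$, which is available precisely when $p\ge3$; then the $r$ directions $a_1,\dots,a_r$ each appear and $\rk_L\delta_{L/K}=\dim\langle da_1,\dots,da_r\rangle=r$ for every $r$. In characteristic $2$ the squares $a_k^2$ are $p$-th powers with vanishing derivative and are useless, so one must instead use hyperbolic terms $a_{2i-1}a_{2i}$, whose partials recover $a_1,\dots,a_r$ only in pairs; equivalently the Hessian $(\partial^2f/\partial a_k\partial a_l)$ has zero diagonal, hence is alternating of even rank. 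This is exactly why the statement requires $p\ge3$ or $r$ even.

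I expect the main obstacle to be the normal form of the first step rather than the geometry. A general finitely generated inseparable extension may be of higher height and need not be modular, so producing a single transcendence basis of $L$ together with exponents presenting $K$ as above requires a careful structural argument; the freedom to choose both the basis of $L$ and the generators of $K$ is what should make it work, but matching the full Frobenius filtration $K\cap L^{p^i}$ is the delicate point. The remaining verification---that the field generated by the maximal minors of $(\phi,\partial_1\phi,\dots,\partial_n\phi)$ is \emph{exactly} $K$, with no interaction between the separable block and the inseparable and padding blocks---is then a direct, if bookkeeping-heavy, computation that the explicit normal form makes transparent.
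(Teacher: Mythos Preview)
Your geometric strategy (a hypersurface parametrized by $[1:a_1:\cdots:a_n:f]$ with a quadratic/hyperbolic $f$, then padding by elements of $K\cap L^p$) is close in spirit to the paper's, and you correctly locate where the hypothesis $p\ge 3$ or $r$ even enters. The gap is the ``normal form'' of your first step, and it is not a matter of bookkeeping: your presentation
\[
K=\kk\bigl(a_1,\dots,a_r,\ a_{r+1}^{p^{e_{r+1}}},\dots,a_{m}^{p^{e_m}}\bigr)
\]
forces $K$ to be a purely transcendental extension of $\kk$, i.e.\ a rational function field. This fails already for simple examples: take $K=K(E)(t)$ with $E$ an elliptic curve and $L=K(E)(t^{1/p})$; here $m=n=2$, $r=1$, and $K$ is not rational, so no choice of $a_1,a_2$ can put $K$ in your form. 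Your fix for the non-rational case (``carry a separable primitive element of $L$ as an extra coordinate'') does not help: adding a coordinate $\alpha$ with $L=\kk(a_1,\dots,a_n)(\alpha)$ throws the uncontrolled partials $\partial\alpha/\partial a_k\in L$ into the Gauss field, so you will not recover $K$ exactly. The non-modularity/height issues you mention are real, but the obstruction is more basic: rationality of $K$.

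The paper avoids this by never asking for such a normal form. Instead it proves a much weaker structural fact (\autoref{lem_number_of_generators}): one can choose a separating transcendence basis $x_1,\dots,x_n$ of $L$ with $x_1,\dots,x_r\in K$ and $L=K(x_{r+1},\dots,x_n)$, and then complete $x_1,\dots,x_r$ to a separating transcendence basis $x_1,\dots,x_r,y_{r+1},\dots,y_m$ of $K$ together with one further generator $a\in K$. The variety is then built not by writing down a single function $f$ of the $a_i$'s, but by embedding $\Spec K$ into $\Gr(n,\PP^{n+1})$ using \emph{all} of $a,x_i,y_j$ as ``slopes'' and choosing the fibre coordinates $z^i$ so that the differential identity $d(a-f)+\sum z^i\,da_i^{n+1}=0$ in $\Omega_{L/\kk}$ holds; this is exactly the criterion of \autoref{thm:GaussImage} via \autoref{lem_loc_description_phi}. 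Your quadratic/hyperbolic $f$ reappears here (Claim~\ref{clm_choice_of_f}), but with a different role: it is used to perturb the $z^i$'s so that they form a separating transcendence basis, not to encode $K$ directly. The upshot is that the paper's argument never needs $K$ (or $L$) to be rational.
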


The essential part of the proof of this theorem
is to show the existence of a hypersurface $X \subset \PP^{n+1}$ satisfying the above condition (see \autoref{thm_construction_for_insep:rem}).
However, in the case when $p=2$ and $\rk_L (\delta_{L/K})$ is odd,
there is no hypersurface $X \subset \PP^{n+1}$ satisfying the condition
(see \cite[Remark 5.3]{gmaptoric}, for example).
In this case,
we do not know whether there exists a subvariety $X \subset \PN$ with $N \geq n+2$ 
whose Gauss map induces the given extension $L/K$.
\\

Finally we focus on general fibers of Gauss maps.
In $p >0$,
Fukasawa \cite[Theorem 1]{Fukasawa2006}
showed that \emph{any} projective variety $F$ appears as a \emph{general fiber} of some inseparable Gauss map,
that is,
he gives a construction of a projective variety $X$
such that a general fiber of $\gamma_X$ with the reduced structure is isomorphic to the given $F$.

Instead of fixed $F$,
we consider a (not necessarily flat) family $\calf = \set{F_y}_{y \in Y}$ of projective varieties over a subvariety $Y \subset \GN$ in $p > 0$.
Then there exists $X \subset \PN$ such that $\overline{\gamma_X(X)}=Y$ and 
the fiber $\gamma_X^{-1}(y)$ over general $y \in Y$ with the reduced structure is isomorphic to $F_y$.
The precise statement is as follows:

\begin{thm}\label{thm:family}
  Assume $p >0$.
  Let $Y \subset \G(n,\PP^N)$ be a projective variety with $\dim Y \geq 1$
  and let $\calf \subset Y \times \PP^{N'}  $ be an $n$-dimensional projective variety such that the first projection $f: \calf \rightarrow Y$
  is surjective.
  Then there exist an $n$-dimensional projective variety $X \subset \PP^N$
  and a generically bijective
  rational map $h : X \dashrightarrow \calf$
  such that $\gamma_X$ is equal to $ f \circ h$.
  In particular, $\overline{\gamma_X(X)} = Y$ holds.
  
  Furthermore, if we assume $n \geq N'$,
  we can take $X \subset \PP^N$ such that the fiber ${\overline{\gamma_X^{-1}(y)}}_{red} \subset \PP^N$ of $\gamma_X$ over general $y \in Y$ is projectively equivalent to
  $F_y:= f^{-1}(y)_{red} \subset  \{y\} \times \PP^{N'}$.
  \begin{equation*}
    \begin{split}
      \xymatrix{        X  \ar@<-0.5ex>@{-->}[rrd]_(.45){\gamma_X} \ar@{-->}[rr]^{h}_{\kern2.5ex\mbox{\scriptsize gen.\,bij.}} && \calf \ar@{->>}[d]^{f} \ar@{}[r]|{\kern-2ex\mbox{$\subset$}} & Y \times \PP^{N'}
        \\
        && Y \ar@{}[r]|{\kern-2ex\mbox{$\subset$}}& \GN \makebox[0pt]{\ .}
      }    \end{split}
  \end{equation*}
\end{thm}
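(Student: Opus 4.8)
The plan is to realize $X$ as the closure of the image of an explicit rational map into the tautological family of $n$-planes over $Y$, and to force the Gauss map to be the projection $f$ by a purely inseparable base change that makes the obstruction to tangency vanish. Over a smooth dense open $Y_0\subset Y$ with local coordinates $y_1,\dots,y_d$ ($d=\dim Y$), choose a rational frame $v_0(y),\dots,v_n(y)\in\kk^{N+1}$ spanning the affine cone $\widehat{\Lambda}_y$ of the $n$-plane $\Lambda_y\subset\PN$ corresponding to $y$. Any $X$ with $\overline{\gamma_X(X)}=Y$ realizing $f$ must, over $y$, consist of points $\Phi=\sum_i c_i v_i(y)$ with $\TT_{[\Phi]}X=\Lambda_y$; writing $w$ for the fiber coordinates, the affine tangent space is spanned by $\Phi$, the $\partial_{w}\Phi$, and the $\partial_{y_j}\Phi=\sum_i(\partial_{y_j}c_i)v_i+\sum_i c_i\,\partial_{y_j}v_i$. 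The first two families and the first sum already lie in $\widehat{\Lambda}_y$, so the tangency condition reduces to $R_j:=\sum_i c_i\,\partial_{y_j}v_i\in\widehat{\Lambda}_y$ for all $j$. In characteristic zero the classes of $\partial_{y_j}v_i$ in the normal space $\kk^{N+1}/\widehat{\Lambda}_y$ are forced to be nonzero (this is exactly what makes $Y$ move), so $R_j\equiv0$ would confine the coefficient vector $(c_i)$ to a fixed linear subspace of dimension $n-d$, i.e. the fiber would be linear; this is the classical rigidity I must break.

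The key step is to break it using $p>0$. I would replace $\calf$ by the purely inseparable cover $\widetilde{\calf}:=\calf\times_{Y,\,\Frob^e}Y$ obtained by adjoining $p^e$-th roots of the base coordinates, so that $\widetilde{\calf}$ carries coordinates $t_j=y_j^{1/p^e}$ with $y_j=t_j^{p^e}$. Expressing the frame as $v_i(y)=v_i(t^{p^e})$ and differentiating with respect to the genuine coordinates $t_j$ of $\widetilde{\calf}$, the chain rule gives $\partial_{t_j}v_i=p^e t_j^{p^e-1}\,\partial_{y_j}v_i=0$ in characteristic $p$, even though $\Lambda_y$ still varies (inseparably) with $t$. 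Hence the obstruction term $R_j=\sum_i c_i\,\partial_{t_j}v_i$ vanishes identically, and every tangent vector of the image automatically lies in $\widehat{\Lambda}_y$. Because $\widetilde{\calf}\to\calf$ is purely inseparable it is bijective on points, so the resulting $h:X\dashrightarrow\widetilde{\calf}\to\calf$ will be generically bijective and will satisfy $\gamma_X=f\circ h$ by construction, whence $\overline{\gamma_X(X)}=\overline{f(\calf)}=Y$.

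It then remains to choose the coefficients $c_i(t,w)$, equivalently a rational map $\widetilde{\calf}\dashrightarrow\PP(\Lambda_y)=\Pn$, so that $\Phi$ is generically injective with image of dimension $n$ and with $\TT_{[\Phi]}X$ equal to all of $\Lambda_y$. Concretely I would fix a fiber coordinate $w\in\PP^{N'}$ and set $c_i(t,w)=\sum_{\ell=0}^{N'}A_{i\ell}(t)\,w_\ell$ for an $(n+1)\times(N'+1)$ matrix $A(t)$ of functions of $t=y^{1/p^e}$; when $n\ge N'$ one can take $A(t)$ of rank $N'+1$, so that at fixed $t$ the map $w\mapsto[c_i(t,w)]$ is a linear embedding $\PP^{N'}\hookrightarrow\Lambda_y$ and the reduced fiber $\overline{\gamma_X^{-1}(y)}_{red}$ is projectively equivalent to $F_y$, as required in the second assertion; for the first assertion one needs only a generically injective linear projection of $F_y$ into $\Lambda_y$, which exists since $\dim F_y=n-d<n$ because $d\ge1$. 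The main obstacle is the simultaneous non-degeneracy: one must arrange that the $n+1$ vectors $\Phi$, the fiber derivatives $\partial_{s_k}\Phi$ along local parameters $s_1,\dots,s_{n-d}$ of $F_y$, and $\partial_{t_j}\Phi=\big(\partial_{t_j}A(t)\big)w$ ($j=1,\dots,d$) are linearly independent, so that $X$ has dimension $n$ and $\TT_{[\Phi]}X=\Lambda_y$ exactly. Here the hypothesis $\dim Y\ge1$ is essential: it is precisely the $d$ directions $\partial_{t_j}\Phi$ — the inseparable variation of $\Lambda_y$ — that supply the tangent directions filling out $\Lambda_y$ beyond the fiber. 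I expect this to follow from a general position argument, choosing $A(t)$ generically and enlarging $e$ if necessary, after which passing to closures over all of $Y$ completes the construction.
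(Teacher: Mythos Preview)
Your approach is essentially the same as the paper's: kill the obstruction by a Frobenius base change $\calf' = \calf \times_Y Y^{(1/p)}$ (the paper uses $e=1$; enlarging $e$ is never needed), then map $\calf'$ rationally into the universal family $\calu$ over $Y$ via a fiberwise map into $\Lambda_y\cong\PP^n$, and take the closure $X'$. Your explicit tangent-space computation is precisely the content of the paper's \autoref{thm:GaussImage} together with \autoref{rem_Phi=0}: once $\pr_1:X'\to Y$ has rank zero, the graph criterion holds automatically, so the remaining work is only to arrange that $\pr_2:X'\to\PP^N$ is separable and generically finite and that $\calf'\dashrightarrow X'$ is birational.

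The one place where your outline is genuinely incomplete is the ``general position argument'' at the end. Your linear-in-$w$ ansatz $c_i(t,w)=\sum_\ell A_{i\ell}(t)w_\ell$ is in fact exactly what the paper produces, but the paper makes the choice of $A$ precise and verifies the needed properties as follows: embed $Y^{(1/p)}\hookrightarrow\PP^{N''}$, take the Segre embedding $\calf'\subset\PP^{N''}\times\PP^{N'}\hookrightarrow\PP^{\overline N}$, and let $\pi:\PP^{\overline N}\dashrightarrow\PP^n$ be a \emph{general} linear projection. The coordinate functions of $\pi$ are then general linear combinations of the monomials $t_\alpha w_\ell$, i.e.\ $A_{i\ell}(t)=\sum_\alpha\lambda_{i,\alpha,\ell}\,t_\alpha$ with generic constants $\lambda$. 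Separability and generic finiteness of $\pi|_{\calf'}$ are now automatic since $\dim\calf'=n$, and this gives separability of $\pr_2$ and hence the full tangent span $\TT_{[\Phi]}X=\Lambda_y$ you want. Birationality of $(\Frob\circ f',\pi):\calf'\dashrightarrow Y\times\PP^n$ onto its image---and hence of $\calf'\dashrightarrow X'$---is the content of the paper's \autoref{lem_number_of_generators_geom}, which uses $\dim Y\ge 1$ exactly where you invoke it. With this concrete choice your argument goes through without further case analysis; note in particular that one should not be satisfied with $\Phi$ being merely generically injective, since in characteristic $p$ that alone does not yield a birational inverse $X\dashrightarrow\calf'$ and hence no well-defined $h$.
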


Roughly,
\autoref{thm:family} states that
any surjective morphism $\calf \arw Y$ appears as a Gauss map up to generically bijective rational maps.
We note that the assumption $n \geq N'$ is necessary in the last statement of \autoref{thm:family} 
since for $y =[L ] \in Y$,
the general fiber ${\overline{\gamma_X^{-1}(y)}}_{red}$ is contained in $L \simeq \PP^n$.

In contrast to Fukasawa's result, we can construct $X$ such that general fibers of $\gamma_X$ with reduced structures are not isomorphic each other.
For example, such $X$ is given by taking $\calf \rightarrow Y$ to be an elliptic fibration with $F_{y} \not\simeq F_{y'}$ at general $y$ and $y'$.
Fukasawa's result is recovered by taking any $(N'-\dim(F))$-dimensional $Y \subset \G(N',\PP^N)$ and setting $\calf = Y \times F$ for a given projective variety $F \subset \PP^{N'}$.
For more discussion, see \autoref{thm:family:rem}.

The rank of the Gauss map of $X$ which we construct in the proof of \autoref{thm:family} is zero.
By combining \autoref{thm:family} and  \autoref{thm_fib_r_c},
we can construct $X$ such that $\gamma_X$ has a given variety as a general
fiber and the rank of $\gamma_X$ is a given positive integer
(see \autoref{prop_rank:rem}).

In the previous paper \cite{gmaptoric},
the authors investigated Gauss maps of toric varieties,
and \autoref{thm:family} and \autoref{thm_fib_r_c} give extensions of results in \cite[\textsection{}5]{gmaptoric} to non-toric cases.
\\

This paper is organized as follows.
In \textsection{}2, we extend the definition of shrinking maps and
give a characterization of graphs of Gauss maps in any characteristic.
In \textsection{}3, by investigating numbers of generators of inseparable field extensions and applying the characterization in \textsection{}2,
we prove \autoref{thm:anyY} and \autoref{thm_construction_for_insep}.
In \textsection{}4, we show \autoref{thm:family}. In \textsection{}5, we describe {degeneracy maps} in the context of shrinking maps
by using the second fundamental form.
As a corollary,
we recover the main theorem of the first author's paper \cite{expshr}.

\subsection*{Acknowledgments}

The authors would like to thank Professors
Satoru Fukasawa and Hajime Kaji for their valuable comments and advice.
The first author was partially supported by JSPS KAKENHI Grant Number 25800030.
The second author was supported by the Grant-in-Aid for JSPS fellows, No.\ 26--1881.

\section{Subvarieties of the universal families of Grassmann varieties}
\label{sec:subv-univ-family}

In this section, we work over an algebraically closed field $\kk$ of any characteristic.
Let us consider the universal family of a Grassmann variety $\GN$,
\begin{equation}\label{eq:duf-univ}
  \sU = \sU_{\GN} :=\left\{ ([L],x) \in  \GN \times \PN \, | \, x \in L \right\} \subset \GN \times \PN.
\end{equation}
Let $X \subset \PP^N$ be an $n$-dimensional projective variety.
Then the \emph{graph} of the Gauss map $\gamma_X$ of $X$, which is the closure of the image of
\[
(\gamma_X, \id_X) : X \dashrightarrow \G(n, \PP^N) \times \PP^N,
\]
is contained in $\sU$ since $x \in \T_x X =\gamma_X(x)$.

As we mentioned in \textsection{}1,
Landsberg and Piontkowski independently gave an necessary and sufficient condition for a subvariety $Y \subset \G(n,\PP^N)$
to be the image of the Gauss map $\gamma_X$ of some $X \subset \PP^N$ in $\chara \kk=0$
by using the \emph{shrinking map} of $Y$.
In addition, a generalization to images of separable $\gamma_X$ was given by the first author
in $\chara \kk \geq 0$.
However, it seems that such characterization does not work well when we consider images of
\emph{inseparable} $\gamma_X$.  

In order to analyze (possibly inseparable) Gauss maps $\gamma_X$,
we characterize not images but graphs of $\gamma_X$ by generalizing the method of shrinking maps.
First, we define the shrinking map as follows.

\vspace{2mm}
We denote by $\sQ$ and $\sS$
the universal quotient bundle and subbundle of rank $n+1$ and $N-n$ on $\GN$
with the exact sequence
\begin{equation}\label{eq_univ_seq_U}
  0 \rightarrow \sS \rightarrow H^0(\PN, \sO(1)) \otimes \sO_{\GN}\rightarrow \sQ \rightarrow 0.
\end{equation}
Note that
$\sU = \cP(\sQ\spcheck)$ holds,
where $\cP (\mathscr{A}) := \mathbf{Proj}(\bigoplus_d S^d (\mathscr{A}\spcheck))$ for a  locally free sheaf $\mathscr{A}$.
Hence we have the tautological invertible sheaf $\sO_{\calu}(-1)$ on $\sU$,
which is a subsheaf of the pull-back of $\sQ\spcheck$ under the first projection $\sU \rightarrow \GN$.

\begin{defn}\label{thm:def-shr-Zf}
  Let $f: Z \dashrightarrow \GN$ be a rational map from a variety $Z$.
  We define the \emph{shrinking map of $Z$ with respect to $f$}
  \[
  \sigma = \sigma_{Z, f} : Z \dashrightarrow \Gr(n^{-}, \PN)
  \]
  for some integer $n^{-} = n^{-}_{\sigma} \leq n$ as follows.
  Let $\Zo$ be an open subset consisting of smooth points of $Z$ and regard $f$ as $f|_{\Zo}$.
  We have the following composite homomorphism
  \begin{equation}\label{eq:def-Phi}
    \Phi = \Phi_f: f^*\sQ\spcheck \rightarrow f^*\sHom(\sHom(\sQ\spcheck , \sS\spcheck), \sS\spcheck)
    \rightarrow \sHom(T_{\Zo}, f^*\sS\spcheck),
  \end{equation}
  where
  the first homomorphism
  is induced from the dual of
  $\sS \otimes \sS\spcheck \rightarrow \sO$,
  and the second one
  is induced from the differential
  $df : T_{\Zo} \rightarrow f^*T_{\GN} = f^*\sHom(\sQ\spcheck , \sS\spcheck)$.
  In other words,
  $\Phi$ is the homomorphism corresponding to $df $ under the identification
  \begin{equation}\label{eq_identify_Phi}
    \Hom (f^*\sQ\spcheck, \sHom(T_{\Zo}, f^*\sS\spcheck)) \simeq \Hom (T_{Z^{\circ}},   f^*\sHom(\sQ\spcheck , \sS\spcheck)) .
  \end{equation}
  We define an integer $n^{-}$
  with $-1 \leq n^- \leq n$ by
  \begin{equation*}    n^{-} = \dim (\ker \Phi \otimes k(z)) - 1
  \end{equation*}
  for a general point $z \in Z$.
  In other word,  $n^{-} +1$ is the rank of the torsion free sheaf $\ker \Phi$.
  Then $\ker \Phi|_{\Zo}$ is a subbundle of
  $H^0(\PN, \sO(1))\spcheck \otimes \sO_{\Zo}$
  of rank $n^{-}+1$
  (replacing $\Zo \subset Z$ by an smaller open subset if necessary).
  By the universality of the Grassmann variety $ \Gr(n^{-}, \PN)$, 
  we have an induced morphism
  $\sigma: \Zo \rightarrow  \Gr(n^{-}, \PN)$
  and call it the \emph{shrinking map of $Z$ with respect to $f$}.
\end{defn}

For a rational map $f: X \dashrightarrow Y$ between varieties,
the \emph{rank} of $f$ (denoted by $\rk f$)
is defined to be the rank of the $\kk$-linear map
$d_xf$ for general $x \in X$,
where $d_xf : T_xX \rightarrow T_{f(x)}Y$
is the differential of $f$ between Zariski tangent spaces.
We note that $\rk f$ coincides with the rank of the ${K(X)}$-linear map
$\delta_{K(X)/K(f(X))}: \Omega_{K(f(X))/\kk} \otimes K(X) \rightarrow \Omega_{K(X)/\kk}$.

\begin{rem}\label{thm:rk0-shr-id}
  If the rank of $f: Z \dashrightarrow \GN$ is zero (i.e., $\delta_{K(Z)/K(f(Z))} = 0$),
  then
  the shrinking map $\sigma = \sigma_{Z,f}$ coincides with $f$.
  The reason is as follows:
  $\rk f  = 0$ implies that $T_Z \rightarrow f^*T_{\GN}$ is the zero map at the generic point of $Z$.
  Hence so is $\Phi$ in \ref{eq:def-Phi},
  that is, $\ker(\Phi) = f^*\sQ\spcheck$. This means that $\sigma = f$.
\end{rem}

\begin{rem}\label{rem_shrinking_map}\label{thm:shr-map-sep-map}
  Originally, the shrinking map is defined for a subvariety of $\GN$.
  If $f: Z \dashrightarrow \GN$ is separable onto its image $Y:= f(Z)$
  (i.e., $\rk f = \dim Y$),
  then $\sigma_{Z, f} = \sigma_{Y} \circ f$ holds, where
  $\sigma_{Y}$ is the shrinking map of $Y$ with respect to $Y \hookrightarrow \GN$.
  
  More generally, if $f: Z \dashrightarrow \GN$
  is decomposed as $Z \stackrel{g}{\dashrightarrow} Z' \stackrel{f'}{\dashrightarrow} \GN$
  and $g$ is dominant and separable,
  then $\sigma_{Z, f} = \sigma_{Z', f'} \circ g$ holds.
\end{rem}

Let us consider a subvariety $X' \subset \GN \times \PN$ contained in the universal family $\sU$ of $\GN$,
and denote by $\pr_1: X' \rightarrow \GN$ and $\pr_2 : X' \rightarrow \PN$
the first and second projections from $X'$ respectively.
We mainly study the case when the above $(Z, f)$ is $(X', \pr_1)$.

The rest of this section is devoted to the proof of the following theorem,
which gives a necessary and sufficient condition for $X' \subset \calu$ to be the graph of the Gauss map of some $X \subset \PN$.
We note that this theorem holds for any characteristic. 

\begin{thm}\label{thm:GaussImage}
  Let $X' \subset \GN \times \PN$ be an $n$-dimensional projective variety contained in $\sU$,
  such that $\pr_2: X' \rightarrow \PN$ is separable and generically finite.
  Let $\Phi = \Phi_{\pr_1}$ and $\sigma = \sigma_{X', \pr_1}$ be as in \autoref{thm:def-shr-Zf}.
  Then the following are equivalent.
  \begin{enumerate}[\normalfont(i)]
  \item\label{thm:GaussImage:graph}  $X'$ is the graph of the Gauss map $\gamma_X$ of some $n$-dimensional projective variety $X \subset \PN$.
  \item\label{thm:GaussImage:zero} The composition of 
    \[
    \sO_{\calu}(-1) |_{{X'}\spcirc} \hookrightarrow \pr_1^* \sQ^{\vee}
    \ \ \text{and}\ \ \,
    \Phi|_{{X'}\spcirc} : \pr_1^* \sQ^{\vee} \arw \sHom(T_{{X'}\spcirc}, \pr_1^* \sS^{\vee})
    \]
    is the zero map on a non-empty open subset ${X'}\spcirc \subset X'$.
  \item\label{thm:GaussImage:shr} The image of $(\sigma, \pr_2) : X' \dashrightarrow \Gr(n^{-}, \PN) \times \PN$
    is contained in 
    the universal family $\sU_{\Gr(n^{-}, \PN)} \subset \Gr(n^{-}, \PN) \times \PN$.
  \end{enumerate}

\end{thm}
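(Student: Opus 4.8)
The plan is to establish the two equivalences (ii)$\Leftrightarrow$(iii) and (i)$\Leftrightarrow$(ii) independently, the former being a direct reformulation and the latter resting on a single differential identity.

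For (ii)$\Leftrightarrow$(iii) I would only unwind the construction of $\sigma=\sigma_{X',\pr_1}$ in \autoref{thm:def-shr-Zf}. By definition $\sigma$ sends a general $z=([L],x)\in X'$ to the $n^{-}$-plane $\PP(\ker\Phi_z)\subseteq L$, where $\ker\Phi_z\subseteq\sQ^{\vee}_{[L]}$ is the fibre of the subbundle $\ker\Phi|_{{X'}^{\circ}}$. Writing $\ell_x=\sO_{\calu}(-1)_z\subseteq\sQ^{\vee}_{[L]}$ for the line corresponding to $x$, the point $(\sigma,\pr_2)(z)=(\sigma(z),x)$ lies in $\sU_{\Gr(n^{-},\PN)}$ precisely when $x\in\sigma(z)$, i.e. when $\ell_x\subseteq\ker\Phi_z$. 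This is verbatim the assertion that $\Phi_z$ annihilates the image of $\sO_{\calu}(-1)_z\hookrightarrow\pr_1^{*}\sQ^{\vee}$, which is condition (ii). Hence (ii) and (iii) coincide, and only (i)$\Leftrightarrow$(ii) carries content.

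For (i)$\Leftrightarrow$(ii) I would pass to a smooth open ${X'}^{\circ}$ and work inside the trivial bundles $W\otimes\sO$ and $W^{\vee}\otimes\sO$, where $W:=H^0(\PN,\sO(1))$; thus a local section $s$ of $\sS$ is a $W$-valued function with $s\in\sS_{[L]}$ the forms vanishing on $L$, and a local generator $\ell$ of $\sO_{\calu}(-1)\subseteq\pr_1^{*}\sQ^{\vee}\subseteq W^{\vee}\otimes\sO$ is a $W^{\vee}$-valued function lifting $x=\pr_2(z)$. Writing $\widehat L=\sQ^{\vee}_{[L]}$ for the affine cone over $L$, the inclusion $X'\subseteq\sU$ forces the incidence relation $\langle s,\ell\rangle\equiv 0$ on ${X'}^{\circ}$. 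Differentiating along $v\in T_{{X'}^{\circ}}$ and using that $\partial_v s$ represents $df(v)(s)\in\sQ_{[L]}$ modulo $\sS_{[L]}$ while $\ell\in\widehat L$ annihilates $\sS_{[L]}$, I obtain
\[
  \langle df(v)(s),\ell\rangle \;=\; -\langle s,\partial_v\ell\rangle \;=\; -\,s\bigl(d\pr_2(v)\bigr)
\]
for all such $v$ and $s$. The left-hand side is exactly the composite of condition (ii) evaluated at $v$ and paired against $s$; hence (ii) holds iff $s(d\pr_2(v))=0$ for every $v$ and every $s\in\sS_{[L]}$, i.e. iff $d\pr_2(T_zX')\subseteq\widehat L$ for general $z$.

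Finally I would convert this into statement (i). Set $X:=\overline{\pr_2(X')}\subseteq\PN$, an $n$-dimensional variety, and recall that $\pr_2$ is separable and generically finite, so $d\pr_2$ has rank $n$ at a general $z$ and its image spans $T_xX$. As the affine cone $\widehat{\TT_xX}$ is spanned by $\ell\in\widehat L$ together with a lift of $T_xX$, the condition $d\pr_2(T_zX')\subseteq\widehat L$ is then equivalent to $\TT_xX\subseteq L$, hence to $\TT_xX=L$ by comparing the dimensions $n=\dim\TT_xX=\dim L$. Thus (ii) holds iff $L=\TT_xX=\gamma_X(x)$ for general $z=([L],x)$, i.e. iff every general point of $X'$ lies on the graph of $\gamma_X$; since $X'$ and that graph are both irreducible of dimension $n$, this is the same as $X'$ being the graph, which is (i). I expect the main obstacle to be the bundle bookkeeping in the displayed identity—verifying that $\partial_v s\bmod\sS_{[L]}$ really computes $df(v)(s)$ under the identification \ref{eq_identify_Phi}, and that $\langle s,\partial_v\ell\rangle$ is independent of the chosen lift $\ell$—together with the careful use of separability needed to pass from $d\pr_2(T_zX')$ to all of $T_xX$, without which one controls only the separable part and the equivalence can genuinely fail.
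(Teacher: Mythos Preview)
Your argument is correct and lands on the same key identity as the paper, but you reach it by a cleaner route. For (ii)$\Leftrightarrow$(iii) you do exactly what the paper does, just said pointwise rather than in terms of the universal sequence on $\Gr(n^-,\PN)$. For (i)$\Leftrightarrow$(ii) the paper separates the work into two pieces: a local-coordinate lemma (\autoref{lem_alpha_1=alpha_2}) showing that two maps $T_{\sU}(-1)\to\overline\pr_1^*\sS^{\vee}$---one built from $d\overline\pr_1$ and the tautological inclusion $\sO_{\sU}(-1)\hookrightarrow\overline\pr_1^*\sQ^{\vee}$, the other from $d\overline\pr_2$ and the normal-bundle surjection $\epsilon$---agree, and then a diagram chase through \ref{eq:Euler-on-U}, \ref{eq:QLS-TTN}, \ref{diag_gamma-p_2} converting this into the equivalence of (ii) with $\gamma_X\circ\pr_2=\pr_1$ (itself equivalent to (i) by \autoref{thm:graph-equiv}). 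Your Leibniz-rule computation on $\langle s,\ell\rangle\equiv 0$ is precisely a coordinate-free proof of $\alpha_1=\alpha_2$ restricted to $X'$, and your reading of $-s(d\pr_2(v))$ as detecting whether $d\pr_2(T_zX')\subseteq\widehat L$ replaces the diagram chase. The caveats you flag are real but routine: $\partial_v s\bmod\sS$ computing $df(v)(s)$ is the standard description of $T_{\GN}\simeq\sHom(\sS,\sQ)$, and the dependence of $\langle s,\partial_v\ell\rangle$ on the lift $\ell$ is exactly the $\sO(-1)$-twist visible in the paper's formulation. What the paper's explicit-coordinate approach buys is the local formula of \autoref{lem_loc_description_phi}, which is used repeatedly downstream (in the proofs of \autoref{thm:anyY}, \autoref{thm_construction_for_insep}, and \autoref{ex_Y_with_r>1}); your argument is shorter for the theorem itself but would still need those coordinates for the applications.
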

Note that the condition~\ref{thm:GaussImage:graph} implies
$X = \pr_2(X')$; this is because,
if $X'$ is  the graph of $\gamma_X$, 
the composition of
$(\gamma_X,\id_X)$
and
$\pr_2 : X' \arw \PN$ is nothing but $\id_X$.
In other words, $\pr_2 : X' \arw X=\pr_2(X') \subset \PP^N$ is
the inverse of the birational map $(\gamma_X,\id_X) : X \dashrightarrow X' \subset \GN \times \PP^N$.

\begin{rem}\label{rem_Phi=0}
  If $\pr_1 : X' \arw \GN$ is of rank $0$, then
  the condition~\ref{thm:GaussImage:shr} of \autoref{thm:GaussImage} holds immediately
  because of \autoref{thm:rk0-shr-id}.
\end{rem}

\subsection{Notation}
\label{sec:notation}

We fix our notation.
Let $\sU$ be the universal family of $\GN$ as in \ref{eq:duf-univ}.
Let $\overline{\pr}_1 : \calu  \arw \GN$ and $\overline{\pr}_2 : \calu \arw  \PN$ be projections to the first and second factors respectively.
We recall that $\overline{\pr}_1 : \calu \arw \GN$ coincides with the projective bundle $\cP (\mathscr{Q}^{\vee}) \arw \GN$,
and the embedding 
\[
\cP (\mathscr{Q}^{\vee})=\sU \subset \GN \times \PN =\PP_{\! \! *} (H^0(\PN, \sO(1))\spcheck \otimes \sO_{\GN} )
\]
is induced by $\mathscr{Q}^{\vee} \subset H^0(\PN, \sO(1))\spcheck \otimes \sO_{\GN} $.
In particular,
we have the tautological invertible sheaf
$\sO_{\calu}(-1) = \overline{\pr}_2^* \mathscr{O}_{\PP^N}(-1) \subset \overline{\pr}_1^* \mathscr{Q}^{\vee}$ on $\calu$.

Take a basis
\[
Z^0, Z^1, \dots, Z^{N} \in H^0(\PN, \sO(1)),
\]
and regard $[Z^0: Z^1: \dots: Z^N]$ as the homogeneous coordinates on $\PN$.
Denote by
\[
Z_0, Z_1, \dots, Z_{N} \in H^0(\PN, \sO(1))\spcheck 
\]
the dual basis of $Z^0, Z^1,\dots, Z^N$.
Let $\Go \subset \GN$ be the open subset consisting of
$n$-planes not intersecting with the $(N-n-1)$-plane
$(Z^0 = Z^1 = \dots = Z^{n} = 0) \subset \PN$.
Then the sheaves $\sQ|_{\Go}$ and $\sS|_{\Go} \spcheck$
are free on $\Go$, and are equal to
$Q \otimes \sO_{\Go}$ and $S\spcheck \otimes \sO_{\Go}$ respectively
for the vector spaces
\[
Q = \bigoplus_{\Ri} \kk \cdot q^i \textand
S\spcheck = \bigoplus_{\Rj} \kk \cdot s_j,
\]
where $q^i $ is the image of $Z^i$ by $H^0(\PP^N,\sO(1)) \otimes \sO_{\GN} \arw \sQ$
and
$s_j$ is the image of $Z_j$ by $H^0(\PP^N,\sO(1))^{\vee} \otimes \sO_{\GN} \arw \sS^{\vee}$.
We denote by $\{q_i\}$ and $\{s^j\}$ the dual basis of $Q^{\vee}$ and $S:=(S^{\vee})^{\vee}$ respectively.
In this setting, we have a standard isomorphism
$\Go \simeq \Hom(Q\spcheck , S\spcheck) \simeq \A^{(n+1)(N-n)}$.
Hence we have $a_i^j \in \sO_{\Go}$ by
\[
(\aij(x))_{i,j}
= \sum_{\Rij} \aij(x) \cdot q^i \otimes s_j \in \Hom(Q\spcheck , S\spcheck)
\]
for each $x \in \Go$. 
In this notation,
$H^0(\PP^N,\sO(1)) \otimes \sO_{\Go} \arw \sQ |_{\Go} = Q \otimes \sO_{\Go}$ is described as
\[
Z^i \mapsto q^i \quad (\Ri), \qquad Z^j \mapsto \sum_{\Ri} q^i a_i^j \quad (\Rj).
\]
Hence
$\overline{\pr}_2: \sU \subset \GN \times \PN \rightarrow \PN$
maps a point
$\left((\aij(x)), [z^0: \dots: z^{n}] \right) \in \sU|_{\Go} =\PP_{\! \! *} (\sQ^{\vee} |_{\Go} )  = \Go \times \cP (Q\spcheck)$
to
\begin{equation}\label{eq:pr_2-U-to-PN}
  \left[z^0: \dots: z^{n}: \sum_{\Ri} z^i a_i^{n+1}(x): \dots: \sum_{\Ri} z^i a_i^{N}(x) \right] \in \PN.
\end{equation}

\begin{lem}\label{lem_loc_description_phi}
  Let $X' \subset \sU$ be a subvariety
  and let $\pr_1 =\overline{\pr}_1 |_{X'} : X' \arw \G(n,\PP^N)$ be the first projection.
  Assume $X' \cap \sU|_{\Go} \neq \emptyset$.
  Then the restriction of the composite homomorphism
  \begin{equation}\label{eq:OU-to-Omega-S}
    \mathscr{O}_{\calu}(-1) |_{X'} \hookrightarrow  \pr_1^* \sQ^{\vee} \stackrel{\Phi}{\longrightarrow} \Omega_{X'} \otimes \pr_1^* \sS^{\vee}
  \end{equation}
  on $X' \cap \sU|_{\Go}$ corresponds to the section
  \[
  \sum_{\Rj} \sum_{\Ri} z^i |_{X'} \cdot  \pr_1^*(d a_{i}^j) \otimes  s_j 
  \in H^0(X' \cap \sU|_{\Go} , \mathscr{O}_{\calu}(1) |_{X'} \otimes  \Omega_{X'} \otimes \pr_1^* \sS^{\vee})
  \]
  under the identification
  $\sHom(\mathscr{O}_{\calu}(-1) |_{X'},   \Omega_{X'} \otimes \pr_1^* \sS^{\vee})
  \simeq \mathscr{O}_{\calu}(1) |_{X'} \otimes  \Omega_{X'} \otimes \pr_1^* \sS^{\vee}$,
  where $\pr_1^*(d\aij)$ is the image of $d\aij$ under
  $\pr_1^*\Omega_{\GN} \rightarrow \Omega_{X'}$.
  
  In particular, the homomorphism \ref{eq:OU-to-Omega-S}
  is zero at the generic point of $X'$
  if and only if $\sum_{\Ri} z^i|_{X'} \cdot \pr_1^*(d a_{i}^j) =0$ in $\Omega_{K(X')/\kk}$ holds for any $ \Rj$.
\end{lem}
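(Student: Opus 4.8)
The plan is to reduce everything to an explicit computation in the affine chart $\Go$, where all the sheaves involved become free and where $\pr_1$ acquires the coordinate description recorded just before the statement. Since $X' \cap \sU|_{\Go} \neq \emptyset$ by hypothesis, I would work throughout on $X' \cap \sU|_{\Go}$. First I would fix the bases $\{q^i\}$, $\{q_i\}$ of $\sQ|_{\Go}$, $\sQ^{\vee}|_{\Go}$ and $\{s_j\}$, $\{s^j\}$ of $\sS^{\vee}|_{\Go}$, $\sS|_{\Go}$, so that the affine coordinates $\aij$ on $\Go \simeq \Hom(Q^{\vee}, S^{\vee})$ correspond to the basis $\{\partial/\partial \aij\} = \{q^i \otimes s_j\}$ of $T_{\Go} = \sHom(\sQ^{\vee}, \sS^{\vee})|_{\Go}$, with dual basis $\{d\aij\}$ of $\Omega_{\Go}$. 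The three ingredients I then need to identify in these terms are the tautological inclusion $\sO_{\calu}(-1)|_{X'} \hookrightarrow \pr_1^* \sQ^{\vee}$, the differential $d\pr_1$, and the contraction that defines $\Phi$.

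Next I would pin down the first two ingredients. From the description \ref{eq:pr_2-U-to-PN} of $\overline{\pr}_2$ together with the identification $\sU|_{\Go} = \Go \times \cP(Q^{\vee})$, the tautological subsheaf $\sO_{\calu}(-1) = \overline{\pr}_2^* \sO_{\PN}(-1) \subset \overline{\pr}_1^* \sQ^{\vee}$ is generated over $X' \cap \sU|_{\Go}$ by the section $\sum_{\Ri} z^i q_i$, since the point $[z^0 : \dots : z^n]$ of $\cP(Q^{\vee})$ is the line $\kk \cdot \sum_{\Ri} z^i q_i$. Because $\pr_1$ is in coordinates the map $x' \mapsto (\aij(x'))$, its differential, viewed via \ref{eq_identify_Phi} as a section of $\Omega_{X'} \otimes \pr_1^* \sHom(\sQ^{\vee}, \sS^{\vee}) = \Omega_{X'} \otimes \pr_1^*(\sQ \otimes \sS^{\vee})$, is $d\pr_1 = \sum_{\Rij} \pr_1^*(d\aij) \otimes (q^i \otimes s_j)$.

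The key step is to unwind the definition \ref{eq:def-Phi} of $\Phi$. Under the adjunction \ref{eq_identify_Phi}, $\Phi$ is obtained from $d\pr_1$ by contracting the $\sQ$-factor against $\pr_1^* \sQ^{\vee}$ through the evaluation $\sQ^{\vee} \otimes \sQ \to \sO$; concretely $\Phi(\xi) = \sum_{\Rij} \langle \xi, q^i \rangle \, \pr_1^*(d\aij) \otimes s_j$ for $\xi \in \pr_1^* \sQ^{\vee}$. This is exactly what the two maps of \ref{eq:def-Phi} produce: the first, built from the dual of $\sS \otimes \sS^{\vee} \to \sO$, inserts the coevaluation $\sO \to \sS \otimes \sS^{\vee}$ (sending $1 \mapsto \sum_{\Rj} s^j \otimes s_j$) in the $\sS$-slot, and the second precomposes with $d\pr_1$; contracting $s^j$ against $s_{j'}$ forces the output index to match, leaving precisely the $\sQ$-contraction above. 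Feeding in the tautological generator $\xi = \sum_{\Ri} z^i q_i$ and using $\langle q_i, q^{i'} \rangle = \delta_{i,i'}$ yields $\sum_{\Rj} \sum_{\Ri} z^i \, \pr_1^*(d\aij) \otimes s_j$, which after restricting the $z^i$ to $X'$ and passing through the identification $\sHom(\sO_{\calu}(-1)|_{X'}, \Omega_{X'} \otimes \pr_1^* \sS^{\vee}) \simeq \sO_{\calu}(1)|_{X'} \otimes \Omega_{X'} \otimes \pr_1^* \sS^{\vee}$ is the claimed section.

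Finally, for the ``in particular'' statement I would use that $\{s_j\}_{\Rj}$ is a free basis of $\sS^{\vee}|_{\Go}$: the displayed section vanishes at the generic point of $X'$ if and only if each of its $s_j$-coefficients $\sum_{\Ri} z^i|_{X'} \, \pr_1^*(d\aij)$ vanishes in $\Omega_{K(X')/\kk}$, the line-bundle factor $\sO_{\calu}(1)|_{X'}$ being irrelevant to vanishing. I expect the only genuine obstacle to be the bookkeeping in the third paragraph, namely correctly matching the double-dual and (co)evaluation maps hidden in \ref{eq:def-Phi} so that $\Phi$ is indeed the single $\sQ$-contraction above, and keeping the roles of the dual bases $\{q^i\}, \{q_i\}$ and $\{s_j\}, \{s^j\}$ straight; once these conventions are fixed, the remaining computation is routine.
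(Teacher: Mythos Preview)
Your proposal is correct and follows essentially the same approach as the paper: work in the chart $\Go$, identify the tautological inclusion with the section $\sum_{\Ri} z^i q_i$, and trace $\Phi$ through its definition to get $q_i \mapsto \sum_{\Rj} \pr_1^*(d a_i^j) \otimes s_j$. The paper's proof is terser---it writes the two-step composition $\pr_1^*\sQ^\vee \to \pr_1^*(\sQ^\vee \otimes \sS \otimes \sS^\vee) = \pr_1^*(\Omega_{\GN} \otimes \sS^\vee) \to \Omega_{X'} \otimes \pr_1^*\sS^\vee$ directly on basis elements---whereas you phrase the same computation via the adjunction \ref{eq_identify_Phi} as a contraction against $d\pr_1$; these are two descriptions of the same map, and your bookkeeping is sound.
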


\begin{proof}
  By the definition of
  \[
  \Phi :  \pr_1^* \sQ^{\vee} \arw \pr_1^* ( \sQ^{\vee} \otimes  \sS \otimes \sS^{\vee}) =
  \pr_1^* ( \Omega_{\G(n,\PP^N)} \otimes \sS^{\vee})  \arw \Omega_{X'} \otimes \pr_1^* \sS^{\vee},
  \]
  $\Phi $ is locally described on ${X' \cap \sU|_{\Go}}$ by
  \[
  q_i \mapsto q_i \otimes \sum_{\Rj}  (s^j \otimes s_j) =\sum_{\Rj} d a_i^j \otimes s_j \mapsto \sum_{\Rj} \pr_1^*(d a_{i}^j) \otimes  s_j .
  \]
  Since $\mathscr{O}_{\calu}(-1) |_{X'} \hookrightarrow  \pr_1^* \sQ^{\vee}$ corresponds to the section
  \[
  \sum_{\Ri} z^i|_{X'} \cdot q_i
  \in H^0({X' \cap \sU|_{\Go}}, \sO_{\sU}(1)|_{X'} \otimes \pr_1^*\sQ\spcheck),
  \]
  this lemma follows.
\end{proof}

\subsection{Equality between two homomorphisms}
\label{sec:equality-between-two}

We have the following commutative diagram on $\calu$ with exact rows and columns:
\begin{equation}\label{eq:Euler-on-U}
  \xymatrix{    & 0 \ar[d] & 0 \ar[d]
    \\
    & \sO_{\sU}(-1) \ar[d] \ar@{=}[r] & \overline{\pr}_2^*(\sO_{\PN}(-1)) \ar[d]
    \\
    0 \ar[r] & \overline{\pr}_1^*\sQ\spcheck \ar[r] \ar[d]
    &  H^0(\PN, \sO(1))\spcheck \otimes \sO_{\sU} \ar[r] \ar[d] & \overline{\pr}_1^*\sS\spcheck \ar[r] \ar[d]^{\simeq} & 0
    \\
    0 \ar[r] & T_{\sU/\GN}(-1) \ar[r]    \ar[d]
    & \overline{\pr}_2^*T_{\PN}(-1) \ar[r]^(.42){\epsilon} \ar[d] & N_{\sU/\GN \times \PN} (-1) \ar[r] & 0 \makebox[0pt]{\,,}
    \\
    &0 & 0
  }\end{equation}
where the left column is the Euler sequence of $\overline{\pr}_1: \sU = \cP(\sQ\spcheck) \rightarrow \GN$
and the middle column is the pullback of the Euler sequence on $\PP^N$ under $\overline{\pr}_2$.
The middle row is the pullback of the dual of the universal sequence \ref{eq_univ_seq_U} under $\overline{\pr}_1$
and the bottom raw is obtained by tensoring $\mathscr{O}_{\calu}(-1)$ with the following natural sequence
\[
0 \arw T_{\calu/\GN} \arw T_{\GN \times \PP/ \GN} |_{\calu} \arw N_{\calu/ (\GN \times \PP)} \arw 0
\]
since $\overline{\pr}_2^*T_{\PN} = T_{\GN \times \PP/ \GN} |_{\calu} $.

\begin{lem}\label{lem_alpha_1=alpha_2}
  The following two homomorphisms $\alpha_1$ and $\alpha_2$ coincide:
  \begin{gather*}
    \alpha_1: T_{\sU}(-1) \xrightarrow{d (\overline{\pr}_1)(-1)} \overline{\pr}_1^* T_{\GN}(-1) = \overline{\pr}_1^* \sHom(\sQ \spcheck, \sS \spcheck)(-1) \rightarrow     \overline{\pr}_1^* \sS\spcheck,
    \\
    \alpha_2: T_{\sU}(-1) \xrightarrow{d (\overline{\pr}_2)(-1)} \overline{\pr}_2^*T_{\PN}(-1) \xrightarrow{\epsilon} \overline{\pr}_1^* \sS\spcheck,
  \end{gather*}
  where $d(\overline{\pr}_i)(-1)$ is the $(-1)$-twist of $d(\overline{\pr}_i)$,
  $\overline{\pr}_1^*\sHom(\sQ \spcheck, \sS \spcheck)(-1) \rightarrow \overline{\pr}_1^* \sS\spcheck$
  is induced from 
  $\mathscr{O}_{\calu}(-1) \subset \overline{\pr}_1^* \sQ^{\vee}$,
  and $\epsilon$ is in the bottom row of the diagram~\ref{eq:Euler-on-U}.
\end{lem}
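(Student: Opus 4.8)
The plan is to identify both $\alpha_1$ and $\alpha_2$ with the single homomorphism that differentiates the tautological section of $\sO_{\calu}(-1)$ with the flat connection on the trivial bundle and then projects off the plane $\overline{\pr}_1^*\sQ\spcheck$. Let $\nabla$ be the canonical flat connection on $H^0(\PN,\sO(1))\spcheck\otimes\sO_{\calu}$, let $s$ denote the tautological inclusion $\sO_{\calu}(-1)\hookrightarrow H^0(\PN,\sO(1))\spcheck\otimes\sO_{\calu}$, and write $\pi_{\sQ}\colon H^0(\PN,\sO(1))\spcheck\otimes\sO_{\calu}\twoheadrightarrow\overline{\pr}_1^*\sS\spcheck$ and $\pi_{\calu}\colon H^0(\PN,\sO(1))\spcheck\otimes\sO_{\calu}\twoheadrightarrow\overline{\pr}_2^*T_{\PN}(-1)$ for the surjections in the middle row and the middle column of \ref{eq:Euler-on-U}. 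I aim to show that both $\alpha_1$ and $\alpha_2$ equal the map $\xi\mapsto\pi_{\sQ}(\nabla_\xi s)$.

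For $\alpha_1$ I would use that $\Phi_{\overline{\pr}_1}$ is, by its definition via the adjunction \ref{eq_identify_Phi}, the homomorphism corresponding to $d\overline{\pr}_1$; hence restricting $\Phi_{\overline{\pr}_1}$ to the subsheaf $\sO_{\calu}(-1)\subset\overline{\pr}_1^*\sQ\spcheck$ and reading the result as a map $T_{\calu}(-1)\to\overline{\pr}_1^*\sS\spcheck$ reproduces $\alpha_1$ exactly. Applying \autoref{lem_loc_description_phi} with $X'=\calu$ then identifies this composite over $\calu|_{\Go}$ with $\sum_{\Rj}\sum_{\Ri} z^i\,\pr_1^*(d a_i^j)\otimes s_j$; in other words the $s_j$-component of $\alpha_1(\xi)$ is $\sum_i z^i\,\xi(a_i^j)$, which is the local form I will match against.

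For $\alpha_2$ I would compute in the same chart. By \ref{eq:pr_2-U-to-PN} a tangent vector $\xi$ moves the image point of $\overline{\pr}_2$ with $\dot Z^i=\dot z^i$ for $\Ri$ and $\dot Z^j=\sum_i(\dot z^i a_i^j+z^i\dot a_i^j)$ for $\Rj$. The map $\epsilon$ in the bottom row of \ref{eq:Euler-on-U} is the passage to the vertical normal bundle $N_{\calu/\GN\times\PN}(-1)$, so it extracts the component of $d\overline{\pr}_2(\xi)$ transverse to the plane $L$, which is cut out in this chart by the equations $Z^j-\sum_i a_i^j Z^i=0$ $(\Rj)$; pairing gives the key cancellation
\[
\dot Z^j-\sum_i a_i^j\dot Z^i=\sum_i(\dot z^i a_i^j+z^i\dot a_i^j)-\sum_i a_i^j\dot z^i=\sum_i z^i\dot a_i^j,
\]
so the $s_j$-component of $\alpha_2(\xi)$ is again $\sum_i z^i\,\xi(a_i^j)$ and agrees with $\alpha_1$. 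Intrinsically this is the commutativity of the lower-right square of \ref{eq:Euler-on-U}: since $\sO_{\calu}(-1)\subset\overline{\pr}_1^*\sQ\spcheck$ we get a factorization $\pi_{\sQ}=\bar\pi\circ\pi_{\calu}$ through a surjection $\bar\pi\colon\overline{\pr}_2^*T_{\PN}(-1)\to\overline{\pr}_1^*\sS\spcheck$, and the square forces $(\mathrm{iso})^{-1}\circ\epsilon=\bar\pi$, where $\mathrm{iso}\colon\overline{\pr}_1^*\sS\spcheck\xrightarrow{\sim}N_{\calu/\GN\times\PN}(-1)$ is the right column; then $\alpha_2(\xi)=\bar\pi(\pi_{\calu}(\nabla_\xi s))=\pi_{\sQ}(\nabla_\xi s)=\alpha_1(\xi)$.

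The step I expect to be the main obstacle is the bookkeeping underlying these identifications: matching each differential $d\overline{\pr}_i(-1)$ with the connection expression $\pi_{\bullet}(\nabla_\bullet s)$ with the correct $\sO_{\calu}(-1)$-twist, and pinning down $\epsilon$ together with the right-column isomorphism $N_{\calu/\GN\times\PN}(-1)\simeq\overline{\pr}_1^*\sS\spcheck$. The local computation above is designed to sidestep most of this by exhibiting both $s_j$-components as $\sum_i z^i\,\xi(a_i^j)$ directly; there the only delicate point is that $\epsilon$ genuinely records the component transverse to $L$, which is guaranteed by the exactness of the bottom row of \ref{eq:Euler-on-U}.
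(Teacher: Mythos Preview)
Your proposal is correct and, at its core, carries out the same local computation as the paper: both arguments work in the chart $\sU|_{\Go}$ and verify that the $s_j$-component of each $\alpha_i(\xi)$ equals $\sum_i z^i\,\xi(a_i^j)$. The differences are in packaging. For $\alpha_1$ the paper computes directly on the basis vectors $\partial/\partial z^k$ and $\partial/\partial a_i^j$, whereas you invoke \autoref{lem_loc_description_phi} (legitimately, since it precedes this lemma) to read off the same formula. For $\alpha_2$ your computation of the transverse component $\dot Z^j-\sum_i a_i^j\dot Z^i$ is exactly the paper's description of $T_{\PN,u_2}\to S\spcheck$ applied to $d\overline{\pr}_2(\xi)$, only written for a general tangent vector rather than basis by basis. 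Your additional intrinsic layer---recognizing both maps as $\xi\mapsto\pi_{\sQ}(\nabla_\xi s)$ via the second fundamental form of $\sQ\spcheck\subset H^0(\PN,\sO(1))\spcheck\otimes\sO_{\sU}$ and the commutativity of the lower-right square of \ref{eq:Euler-on-U}---is a genuine conceptual addition not present in the paper, and it explains \emph{why} the cancellation occurs rather than just exhibiting it. The paper's approach is more elementary and self-contained; yours is slightly more conceptual but, as you yourself note, requires care with the twist bookkeeping, which is why falling back on the local check is prudent.
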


\begin{proof}
  It is sufficient to show $(\alpha_1)_u = (\alpha_2)_u$ at each $u =(u_1,u_2) \in \calu |_{\Go} \subset \Go \times \PP^N$. 
  We set $Z^0=z^0=1$, and regard $Z^1,\ldots,Z^N$ and $z^1,\ldots,z^n$ as (non-homogeneous) coordinates on $\PP^N$ and $\PP_{\! \! *}(Q^{\vee})$ respectively. 
  
  First, we consider
  $(\alpha_2)_u : T_{\sU, u} \rightarrow T_{\PN,u_2} \rightarrow S\spcheck$.
  From \ref{eq:pr_2-U-to-PN}, the linear map
  $T_{\sU, u} \rightarrow T_{\PN, u_2}$ is described by
  \begin{align*}
    &\frac{\partial}{\partial z^k} \mapsto  \frac{\partial}{\partial Z^k} + \sum_{\Rj} a_k^{j}(u) \cdot \frac{\partial}{\partial Z^j} & (\Rk),
    \\
    &\frac{\partial}{\partial \aij} \mapsto  z^i \cdot \frac{\partial}{\partial Z^j} & (\Rij).
  \end{align*}
  On the other hand,
  $T_{\PN, u_2} \rightarrow S\spcheck$ is described by
  $\diff{}{Z^k} \mapsto - \sum_{\Rj} a_k^j s_j$ for $\Rk$ and $\diff{}{Z^j} \mapsto s_j$ for $\Rj$.
  Hence,
  \begin{align*}
    &(\alpha_2)_u\left( \frac{\partial}{\partial z^k} \right) =  0 & (\Rk),
    \\
    &(\alpha_2)_u\left( \frac{\partial}{\partial \aij} \right) =  z^i\cdot s_j & (\Rij).
  \end{align*}

  Next we consider
  $(\alpha_1)_{u}: T_{\sU,u} \rightarrow T_{\GN, u_1} \rightarrow S\spcheck$.
  The linear map
  $T_{\sU,u} \rightarrow T_{\GN, u_1} = \Hom(Q\spcheck, S\spcheck)$
  is described by
  $\diff{}{z^k} \mapsto 0$
  $(\Rk)$
  and
  $\diff{}{\aij} \mapsto q^i \otimes s_j$
  $(\Rij)$.
  On the other hand,
  $T_{\GN, u_1} = \Hom(Q\spcheck, S\spcheck) \rightarrow S\spcheck$
  is given by $q^i \otimes s_j \mapsto z^i \cdot s_j$.
  Therefore $(\alpha_1)_u(\diff{}{z^k}) = 0$ and $(\alpha_1)_u(\diff{}{\aij}) = z^i \cdot s_j$,
  which means that $(\alpha_1)_u = (\alpha_2)_u$.
\end{proof}

\subsection{Characterization of graphs of Gauss maps}

\begin{lem}\label{thm:graph-equiv}
  Let $X' \subset \GN \times \PP^N$ be an $n$-dimensional projective variety
  and set $X = \pr_2(X') \subset \PN$.
  Then the following are equivalent.
  \begin{enumerate}[\quad \normalfont(1)]
  \item $X'$ is the graph of the Gauss map of some $n$-dimensional projective variety in $\PN$.
  \item $\dim X = n$ and $X'$ is the graph of the Gauss map $\gamma_X$ of $X \subset \PN$.
  \item $\dim X = n$ and $\gamma_X \circ \pr_2 =\pr_1$.
  \end{enumerate}
\end{lem}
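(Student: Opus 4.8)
The plan is to treat condition~(2) as the central one and prove the implications $(2)\Rightarrow(1)\Rightarrow(2)$ together with the equivalence $(2)\Leftrightarrow(3)$. The implication $(2)\Rightarrow(1)$ is immediate: condition~(2) exhibits $X'$ as the graph of the Gauss map of the specific variety $X$, which is an $n$-dimensional projective variety in $\PN$, so~(1) holds.

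For $(1)\Rightarrow(2)$, suppose $X'$ is the graph of $\gamma_W$ for some $n$-dimensional $W\subset\PN$. I would invoke the observation recorded just after \autoref{thm:GaussImage}: the second projection restricts to a birational morphism $\pr_2:X'\to W$, being the inverse of the birational map $(\gamma_W,\id_W):W\dashrightarrow X'$ onto its image. Since $W$ is projective, $\pr_2(X')=W$, so $W=X$ by the definition of $X$; in particular $\dim X=\dim W=n$ and $\gamma_X=\gamma_W$, whence $X'$ is the graph of $\gamma_X$, which is exactly~(2).

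The substantive content is the equivalence $(2)\Leftrightarrow(3)$. For $(2)\Rightarrow(3)$, a general point of the graph $X'$ has the form $(\gamma_X(x),x)$, where $\pr_1$ returns $\gamma_X(x)$ and $\pr_2$ returns $x$; hence $(\gamma_X\circ\pr_2)(\gamma_X(x),x)=\gamma_X(x)=\pr_1(\gamma_X(x),x)$, so the two rational maps agree on a dense subset of $X'$, i.e.\ as rational maps. Conversely, for $(3)\Rightarrow(2)$, I would first note that $\dim X=n=\dim X'$ together with the surjectivity $\pr_2(X')=X$ forces $\pr_2:X'\to X$ to be dominant and generically finite, so that $\gamma_X\circ\pr_2$ is a well-defined rational map and the hypothesis $\pr_1=\gamma_X\circ\pr_2$ is meaningful. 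Then a general point $w\in X'$ equals $(\pr_1(w),\pr_2(w))=(\gamma_X(\pr_2(w)),\pr_2(w))$, which shows that the composite $(\gamma_X,\id_X)\circ\pr_2$ coincides with the inclusion $X'\hookrightarrow\GN\times\PN$ on a dense open subset. Since $\pr_2$ is dominant, the closure of the image of $(\gamma_X,\id_X)\circ\pr_2$ equals the closure of the image of $(\gamma_X,\id_X)$, namely the graph of $\gamma_X$; but it also equals the closure of the image of the inclusion, namely $X'$ itself. Therefore $X'$ is the graph of $\gamma_X$, giving~(2).

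The step I expect to demand the most care is $(3)\Rightarrow(2)$: one must confirm that $\pr_2$ is dominant so that the composition $\gamma_X\circ\pr_2$ is legitimate and its image closure is computed correctly, and one must keep track of the indeterminacy loci of $\gamma_X$ and of $\pr_2$ so that the pointwise identity $w=(\gamma_X(\pr_2(w)),\pr_2(w))$ genuinely holds on a dense open subset of $X'$. All the remaining implications reduce to bookkeeping about graphs and the already-established birationality of $\pr_2$ from the remark following \autoref{thm:GaussImage}.
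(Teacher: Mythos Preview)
Your proof is correct and follows essentially the same route as the paper: the implications $(2)\Rightarrow(1)$ and $(1)\Rightarrow(2)$ are handled exactly as in the paper (the latter via the remark after \autoref{thm:GaussImage}), and for $(2)\Leftrightarrow(3)$ both you and the paper argue by examining a general point $w=([L],x)\in X'$ and using $\pr_1(w)=\gamma_X(\pr_2(w))$ to identify $X'$ with the graph. Your version is slightly more explicit about dominance of $\pr_2$ and closure-taking in $(3)\Rightarrow(2)$, while the paper phrases $(2)\Rightarrow(3)$ via the birational inverse $\pr_2=(\gamma_X,\id_X)^{-1}$, but these are cosmetic differences.
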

\begin{proof}
  (2) $ \Rightarrow $ (1) holds immediately.
  (1) $ \Rightarrow $ (2) holds as we already noted after the statement of \autoref{thm:GaussImage}.

  (2) $ \Leftrightarrow $ (3):
  If $X'$ is  the graph of $\gamma_X$, then $(\gamma_X,\id_X): X \dashrightarrow X'$ is birational and its inverse is $\pr_2$.
  Since $\pr_1 \circ (\gamma_X,\id_X) =\gamma_X$,
  we have $\pr_1   =\gamma_X \circ (\gamma_X,\id_X)^{-1}=\gamma_X \circ \pr_2$.

  Conversely, assume that $\dim X=n$ and $\pr_1   =\gamma_X \circ \pr_2$.
  Let $x' =([L],x) $ be a general point of $X' \subset \GN \times \PP^N$.
  Then we have $[ L] =\pr_1 (x')= \gamma_X \circ \pr_2 (x')=\gamma_X (x)$ in $\GN$.
  Hence a general point of $X'$ is written as $(\gamma_X(x), x)$ for some $x \in X$,
  which means that $X'$ coincides with the graph of $\gamma_X$.
\end{proof}

\begin{proof}[Proof of \autoref{thm:GaussImage}]

  \ref{thm:GaussImage:graph} $\Leftrightarrow$ \ref{thm:GaussImage:zero}.
  It is sufficient to check the equivalence between \ref{thm:GaussImage:zero} of the theorem
  and (3) of \autoref{thm:graph-equiv}.
  We set $\sO_{X'}(-1) := \sO_{\calu}(-1)|_{{X'}}$.

  Let 
  $ 0 \rightarrow \sQ\spcheck \stackrel{\jmath}{\rightarrow} H^0(\PN, \sO(1))\spcheck \otimes \sO_{\GN} \stackrel{\varpi}{\rightarrow} \sS\spcheck \rightarrow 0$
  be the dual of the universal exact sequence \ref{eq_univ_seq_U}.
  By the definition of the Gauss map $\gamma=\gamma_X$,
  we have the following diagram on the smooth locus $X_{sm}$ of $X$.
  \begin{equation}\label{eq:QLS-TTN}
    \begin{aligned}
      \xymatrix{    0 \ar[r] & \gamma^*\sQ\spcheck \ar[r]^(.3){\gamma^* \jmath} \ar[d]_{\theta}
        &  H^0(\PN, \sO(1))\spcheck \otimes \sO_{X_{sm}} \ar[r]^(.7){\gamma^* \varpi} \ar[d] & \gamma^*\sS\spcheck \ar[r] \ar[d]^{\simeq} & 0
        \\
        0 \ar[r] & T_{X_{sm}}(-1) \ar[r]
        & T_{\PN}(-1) |_{X_{sm}} \ar[r]  & N_{X_{sm}/\PP^N} (-1) \ar[r] & 0 .
      }      \end{aligned}
  \end{equation}

  Since $\pr_2 : X' \arw  X = \pr_2 (X') \subset  \PP^N$ is separable and generically finite,
  we can take an open subset $X'^{\circ} \subset X'$ such that $X'^{\circ}$ is smooth and $\pr_2 |_{X'^{\circ}}$  is \'{e}tale.
  Now we regard $\pr_2$ as $\pr_2|_{{X'}\spcirc}$.
  Since $d (\pr_2) : T_{X'^{\circ}} \arw  \pr_2^* T_{X} $ is an isomorphism,
  we have
  \begin{equation}\label{diag_gamma-p_2}
    \begin{aligned}
      \xymatrix{    0 \ar[r] & (\gamma \circ \pr_2)^*\sQ\spcheck \ar[r]^(.4){(\gamma \circ \pr_2)^* \jmath} \ar@{->>}[d]
        &  H^0(\PN, \sO(1))\spcheck \otimes \sO_{{X'}\spcirc} \ar[r]^(.57){(\gamma \circ \pr_2)^* \varpi} \ar[d] & (\gamma \circ \pr_2)^*\sS\spcheck \ar[r] \ar[d]^{\simeq} & 0
        \\
        0 \ar[r] & T_{X'^{\circ}}(-1) \ar[r]^(.44){d(\pr_2)(-1)}
        & \pr_2^* T_{\PN}(-1) |_{{X'}\spcirc} \ar[r]  & \pr_2^* N_{X_{sm}/\PP^N} (-1) \ar[r] & 0
      }      \end{aligned}
  \end{equation}
  by pulling back the diagram~\ref{eq:QLS-TTN} by $\pr_2 : X'^{\circ} \arw X$.

  Since $ \sQ\spcheck \subset H^0(\PN, \sO(1))\spcheck \otimes \sO_{\GN}$
  is the universal subbundle,
  $\gamma \circ \pr_2 = \pr_1$ holds if and only if
  \[
  (\gamma \circ \pr_2)^*\sQ\spcheck =  \pr_1^* \sQ\spcheck
  \]
  holds as subsheaves of $H^0(\PN, \sO(1))\spcheck \otimes \sO_{{X'}\spcirc}$.
  Since $(\gamma \circ \pr_2)^*\sQ\spcheck$ and $ \pr_1^* \sQ\spcheck$ have the same rank,
  $(\gamma \circ \pr_2)^*\sQ\spcheck= \pr_1^* \sQ\spcheck$ holds if and only if
  $(\gamma \circ \pr_2)^*\sQ\spcheck$ is contained in $ \pr_1^* \sQ\spcheck$.
  Since $ \pr_1^* \sQ\spcheck$ is the kernel of $H^0(\PN, \sO(1))\spcheck \otimes \sO_{{X'}\spcirc} \stackrel{\pr_1^* \varpi}{\longrightarrow} \pr_1^* \sS\spcheck$,
  the inclusion $(\gamma \circ \pr_2)^*\sQ\spcheck \subset \pr_1^* \sQ\spcheck$ holds
  if and only if
  \begin{align}\label{eq_2q-s1}
    (\gamma \circ \pr_2)^*\sQ\spcheck \stackrel{(\gamma \circ \pr_2)^* \jmath}{\longrightarrow} 
    H^0(\PN, \sO(1))\spcheck \otimes \sO_{{X'}\spcirc} \stackrel{\pr_1^* \varpi}{\longrightarrow} \pr_1^* \sS\spcheck
  \end{align}
  is the zero map.
  We have the following commutative diagram
  \[
  \xymatrix{      (\gamma \circ \pr_2)^*\sQ\spcheck \ar[r]^(.4){(\gamma \circ \pr_2)^* \jmath} \ar@{->>}[d]
    &  H^0(\PN, \sO(1))\spcheck \otimes \sO_{{X'}\spcirc} \ar[r]^(.65){\pr_1^* \varpi} \ar[d] & \pr_1^*\sS\spcheck  \ar[d]^{\simeq} 
    \\
    T_{X'^{\circ}}(-1) \ar[r]^(.45){d(\pr_2) (-1)} \ar@/_1.5pc/[rr]_{\beta}& \pr_2^* T_{\PN}(-1)  \ar[r]^(.4){\ep}  & N_{\sU/\GN \times \PN} (-1) |_{{X'}\spcirc},
  }\]
  where the commutativity of the left and right squares follows from \ref{diag_gamma-p_2} and \ref{eq:Euler-on-U} respectively.
  Since $(\gamma \circ \pr_2)^*\sQ\spcheck  \arw T_{X'^{\circ}}(-1) $ is surjective,
  \ref{eq_2q-s1} is the zero map if and only if so is the bottom row
  $\beta: T_{{X'}\spcirc}(-1) \rightarrow N_{\sU/\GN \times \PN} (-1)|_{{X'}\spcirc} \simeq \pr_1^* \sS\spcheck$.
  Since $d (\pr_2) (-1)$ is decomposed as
  \[
  T_{{X'}\spcirc}(-1) \hookrightarrow T_{\calu}(-1)|_{{X'}\spcirc} \stackrel{d (\overline{\pr}_2) (-1)}{\longrightarrow} \overline{\pr}_2^* T_{\PN}(-1)|_{{X'}\spcirc} = \pr_2^* T_{\PN}(-1) ,
  \]
  $\beta$ is the composition of $T_{{X'}\spcirc}(-1) \hookrightarrow T_{\calu}(-1) |_{{X'}\spcirc}$ and $\alpha_2 |_{{X'}\spcirc}$.
  By \autoref{lem_alpha_1=alpha_2}, we have $\alpha_1 =\alpha_2$.
  By the definition of $\alpha_1$, the homomorphism
  $\beta \in \Hom(T_{{X'}\spcirc}(-1), \pr_1^* \sS\spcheck)$ is obtained with the following commutative diagram
  \[
  \xymatrix{  T_{{X'}\spcirc}(-1) \ar[r]^(.38){ d(\pr_1) (-1)} \ar@{^(->}[d] & {\pr}_1^* \sHom(\sQ \spcheck, \sS \spcheck)(-1) \ar[dr] \ar@{^(->}[d]
    \\
    T_{{X'}\spcirc} \otimes \pr_1^*\sQ\spcheck \ar[r] & {\pr}_1^* \sHom(\sQ \spcheck, \sS \spcheck) \otimes \pr_1^*\sQ\spcheck \ar[r] & {\pr}_1^* \sS\spcheck,
  }\]
  where we recall that $T_{\GN} = \sHom(\sQ \spcheck, \sS \spcheck)$.
  By construction,
  the bottom map
  $T_{{X'}\spcirc} \otimes \pr_1^*\sQ\spcheck \arw  {\pr}_1^* \sS\spcheck$ corresponds to $d (\pr_1) : T_{{X'}\spcirc} \arw\pr_1^* T_{\GN}$
  under the identification
  \[
  \Hom (T_{{X'}\spcirc} \otimes \pr_1^*\sQ\spcheck,  {\pr}_1^* \sS\spcheck) \simeq \Hom(T_{{X'}\spcirc} , \pr_1^* \sHom(\sQ \spcheck, \sS \spcheck)).
  \]
  Since $\Phi = \Phi_{\pr_1}$ also corresponds to $d (\pr_1) $ by \ref{eq_identify_Phi},
  $\beta$ corresponds to the composite homomorphism
  \begin{align}\label{eq_O(-1)_to}
    \sO_{{X'}\spcirc}(-1) \hookrightarrow \pr_1^* \sQ^{\vee} \stackrel{\Phi}{\longrightarrow} \sHom(T_{{X'}\spcirc}, \pr_1^* \sS\spcheck)
  \end{align}
  under the identification
  \[
  \Hom (T_{{X'}\spcirc}(-1) ,\pr_1^*\sS\spcheck )\simeq \Hom(\sO_{{X'}\spcirc}(-1), \sHom(T_{{X'}\spcirc}, \pr_1^* \sS\spcheck)).
  \]
  Hence $(\gamma \circ \pr_2)^*\sQ\spcheck= \pr_1^* \sQ\spcheck$ holds if and only if
  \ref{eq_O(-1)_to} vanishes, and \ref{thm:GaussImage:graph} $\Leftrightarrow$ \ref{thm:GaussImage:zero} is proved.
  \\

  \noindent
  \ref{thm:GaussImage:zero} $\Leftrightarrow$ \ref{thm:GaussImage:shr}.
  Let
  \[
  0 \arw \sS^- \arw H^0(\PP^N,\sO(1)) \otimes \sO_{\G(n^-,\PP^N)} \arw \sQ^- \arw 0
  \]
  be the universal exact sequence on $\G(n^-,\PP^N)$,
  and let $\widetilde{\pr}_1 , \widetilde{\pr}_2$ be the projections from $\G(n^-,\PP^N) \times \PP^N $ to the first and second factors respectively.
  We note that the universal family $\calu_{\G(n^-,\PP^N)} \subset \G(n^-,\PP^N) \times \PP^N$ is nothing but the locus
  where the composite homomorphism
  \[
  \widetilde{\pr}_2^* \sO_{\PP^N}(-1) \hookrightarrow H^0(\PP^N,\sO(1))^{\vee} \otimes \sO_{\G(n^-,\PP^N) \times \PP^N} \arw \widetilde{\pr}_1 (\sS^{-})^{\vee}
  \]
  vanishes.
  Let $\sigma = \sigma_{X', \pr_1}$ be the shrinking map of $X'$ and take an open subset
  ${X'}\spcirc$ consisting of smooth points at which $\sigma$ is defined.
  Then $(\sigma, \pr_2)({X'}\spcirc) \subset \calu_{\G(n^-,\PP^N)}$ if and only if
  \[
  \sO_{{X'}\spcirc}(-1) = \pr_2^* \sO_{\PP^N}(-1) \arw H^0(\PP^N,\sO(1))^{\vee} \otimes \sO_{{X'}\spcirc} \arw  \sigma^* ( \sS^{-})^{\vee}
  \]
  vanishes.
  Since the kernel of $H^0(\PP^N,\sO(1))^{\vee} \otimes \sO_{{X'}\spcirc} \arw \sigma^* ( \sS^{-})^{\vee}$ is $\sigma^* ( \sQ^{-})^{\vee} $,
  the inclusion $(\sigma, \pr_2)({X'}\spcirc) \subset \calu_{\G(n^-,\PP^N)}$ holds if and only if
  $\sO_{{X'}\spcirc}(-1)$ is contained in $\sigma^* ( \sQ^{-})^{\vee} $ as a subsheaf of $H^0(\PP^N,\sO(1))^{\vee} \otimes \sO_{{X'}\spcirc}$.

  Since $\sigma^* ( \sQ^{-})^{\vee} = \ker\Phi$ by the definition of $\sigma$,
  $\sO_{{X'}\spcirc}(-1)$ is contained in $\sigma^* ( \sQ^{-})^{\vee} $ if and only if
  $\sO_{{X'}\spcirc}(-1) \hookrightarrow  \pr^*_1 \sQ^{\vee} \stackrel{\Phi}{\arw} \sHom(T_{{X'}\spcirc}, \pr_1^* \sS\spcheck)$
  vanishes.
  Thus \ref{thm:GaussImage:zero} $\Leftrightarrow$ \ref{thm:GaussImage:shr} holds.
\end{proof}

\section{Field extensions and Gauss maps}\label{sec_filed_extension}

In this section, we prove Theorems \ref{thm:anyY}, \ref{thm_construction_for_insep}
by using \autoref{thm:GaussImage},
We mainly consider the case $p = \chara \kk > 0$.

\begin{defn}\label{def_sep_tr_basis}
  Let $L$ be a field over $\kk$ and set $n=\trdeg_{\kk} L$.
  A set $\{x_1,\ldots,x_n \} $ of $n$ elements in $L$ is called a \emph{separating transcendence basis} of $L /\kk$
  if $\{dx_1,\ldots,dx_n\}$ is a basis of $L$-vector space $\Omega_{L/\kk}$.
  In other words,
  $\{x_1,\ldots,x_n \} $ is a separating transcendence basis of $L /\kk$ if and only if
  $L/\kk(x_1,\dots, x_n)$ is a finite separable extension.
\end{defn}

First, we show a lemma about the number of generators of field extensions.

\begin{lem}\label{lem_number_of_generators}
  Let $L / K$ be a field extension over $\kk$.
  Set $n=\trdeg_{\kk} L$ and $m=\trdeg_{\kk} (K) $.
  Assume $m \geq 1$. Then
  there exists a separating transcendence basis $x_1,\ldots,x_n$ of $L / \kk$
  such that $L =K(x_1,\ldots,x_n)$.
  More strongly,
  we can take $x_i \in K$ for $i \leq \min \{ \rk_L(\delta_{L/K}) , m-1\}$.
\end{lem}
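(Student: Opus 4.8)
The plan is to reduce the claim to a statement about the minimal number of field generators and then to settle it with the cotangent sequence together with the primitive element theorem. First I fix a separating transcendence basis $t_1,\dots,t_m$ of $K/\kk$ (which exists because $\kk$ is algebraically closed, hence perfect), so that $dt_1,\dots,dt_m$ is a basis of $\Omega_{K/\kk}$. Their images in $\Omega_{L/\kk}$ span the image of $\delta_{L/K}$, a subspace of dimension $r:=\rk_L(\delta_{L/K})$, so after reordering I may assume $dt_1,\dots,dt_r$ are $L$-linearly independent while $dt_{r+1},\dots,dt_m$ lie in their span. I set $\mu=\min\{r,m-1\}$ and $x_i:=t_i\in K$ for $i\le\mu$. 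Completing $dt_1,\dots,dt_r$ to a basis $dt_1,\dots,dt_r,du_1,\dots,du_{n-r}$ of $\Omega_{L/\kk}$ by suitable $u_1,\dots,u_{n-r}\in L$, the set $\{t_1,\dots,t_r,u_1,\dots,u_{n-r}\}$ is a separating transcendence basis of $L/\kk$ by \autoref{def_sep_tr_basis}. Writing $K'=K(u_1,\dots,u_{n-r})$, the image of $\delta_{L/K'}$ in $\Omega_{L/\kk}$ is spanned by $dt_1,\dots,dt_m,du_1,\dots,du_{n-r}$, which equals $\langle dt_1,\dots,dt_r,du_1,\dots,du_{n-r}\rangle=\Omega_{L/\kk}$; hence $\Omega_{L/K'}=0$ and $L/K'$ is finite separable.

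If $r=m$, equivalently $L/K$ is separably generated, then $\mu=m-1$; I choose a primitive element $\theta$ with $L=K'(\theta)$ and put $x_m,\dots,x_n:=u_1,\dots,u_{n-m},\theta$. Since $t_1,\dots,t_{m-1}\in K$ this gives $L=K(x_1,\dots,x_n)$, and as I have merely exchanged $t_m$ for $\theta$ it only remains to guarantee that $d\theta$ has nonzero $dt_m$-coefficient in the basis above; should it vanish I replace $\theta$ by $\theta+c\,t_m$ for a suitable $c\in\kk$, which does not change $K'(\theta)$ because $t_m\in K$.

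The essential case is $r<m$, where $\mu=r$ and I must generate $L$ over $K$ by the minimal possible number $n-r=\dim_L\Omega_{L/K}$ of elements. Because $L/K'$ is separable, $\Omega_{K'/K}\otimes_{K'}L\cong\Omega_{L/K}$ has dimension $n-r$, which is strictly larger than $\trdeg_K K'=\trdeg_K L=n-m$; thus $K'/K$ is not separably generated. As $du_1,\dots,du_{n-r}$ form a basis of $\Omega_{K'/K}$, some $u_j$---say $u_1$ after reordering---must be inseparable, in particular algebraic, over $B:=K(u_2,\dots,u_{n-r})$, for otherwise each $du_j$ would lie in the span of the others. Then $L=B(u_1,\theta)$ is finite over $B$, and the cotangent sequence gives $\dim_L\Omega_{L/B}=1$. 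A finite extension with one-dimensional module of differentials is simple---its separable part is simple by the primitive element theorem and its purely inseparable part is generated by one element---so $L=B(\gamma)$ for some $\gamma$. Taking $x_{r+1},\dots,x_n:=u_2,\dots,u_{n-r},\gamma$ gives $L=K(x_1,\dots,x_n)$, and since $d\gamma$ spans the one-dimensional $\Omega_{L/B}$ its $du_1$-coefficient is nonzero; hence $dt_1,\dots,dt_r,du_2,\dots,du_{n-r},d\gamma$ is again a basis of $\Omega_{L/\kk}$ and the new $x_i$ form a separating transcendence basis.

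The main obstacle is exactly this inseparable case: reaching the minimal generator count $n-r$. What makes it possible is that inseparability of $K'/K$ supplies a generator $u_1$ that is algebraic---indeed inseparable---over the field generated by $K$ and the remaining $u_j$; this both makes $L/B$ finite and lets the extra separable generator $\theta$ be absorbed through the generalized primitive element theorem, since $\dim_L\Omega_{L/B}=1$ forces $L/B$ to be simple. Once this is secured, the check that the chosen elements form a separating transcendence basis is automatic from the cotangent sequence, the only additional ingredient being the scalar adjustment needed in the separable case.
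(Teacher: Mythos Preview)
Your approach is correct and genuinely different from the paper's. The paper does not split into cases: it sets $K' = K(x_{m+1},\ldots,x_n)$ for a transcendence basis $x_{m+1},\ldots,x_n$ of $L/K$ (so $L/K'$ is finite but possibly inseparable), uses $L^{p^e} \subset K'_{sep}$ together with $L = L^{p^e}(x_1,\ldots,x_n)$ to obtain $L = K'_{sep}(x_1,\ldots,x_m)$, and then absorbs the primitive element $w$ of $K'_{sep}/K'$ into $x_m$ via $\tilde x_m = w + t x_m$ for general $t$. Your route instead builds $K' = K(u_1,\ldots,u_{n-r})$ so that $L/K'$ is finite \emph{separable} from the outset, trading the Frobenius trick for heavier use of the cotangent sequence; the price is the case split $r=m$ versus $r<m$ and the search for an inseparable $u_j$. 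Both arguments ultimately rest on the same generalized primitive element theorem.

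Two justifications should be tightened. The clause ``for otherwise each $du_j$ would lie in the span of the others'' does not do the job: if every $u_j$ were transcendental over $B_j$ the $du_j$ would remain independent. The correct argument is that $\trdeg_K K' = n-m < n-r$ forces some $u_j$ to be algebraic over $B_j$, and that $u_j$ cannot be separably algebraic, since then $\Omega_{K'/K}\cong\Omega_{B_j/K}\otimes K'$ would be generated by the remaining $n-r-1$ differentials. Second, ``separable part simple and purely inseparable part simple'' does not by itself yield that $L/B$ is simple; merging the two generators into one is exactly the content of the theorem of Jacobson that the paper cites, so you are implicitly invoking the same result.
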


\begin{proof}
  Set $r= \rk_L(\delta_{L/K}) \leq m$.
  By the definition of $\delta_{L/K}$,
  there exist $x_{1}, \ldots , x_{r} \in K$ and $x_{r+1},\ldots, x_n \in L$
  such that $x_{1},\ldots, x_{n}$ form a separating transcendence basis of $L / \kk$.
  By renumbering the indices of $x_{r+1},\ldots, x_{n}$,
  we may assume that 
  $x_{m+1},\ldots, x_{n}$ form a transcendence basis of $L/K$.
  Then $L/K'$ is a finite extension for $K' := K(x_{m+1},\ldots,x_{n}) \subset L$.
  Let $K'_{sep} \subset L$ be the separable closure of $ K'$ in $L$.
  Since $L / K'_{sep}$ is purely inseparable,
  $K'_{sep} \supset L^{p^e} $ holds for $e \gg 0$.
  Since $x_{1},\ldots, x_{n} $ form a separating transcendence basis of $L / \kk$,
  we have $L=L^{p^e} (x_{1},\ldots, x_{n}) $ for any $e \geq 0$
  (this is because, $L/L^{p^e} (x_{1},\ldots, x_{n})$ is separable, and
  the separable degree $[L:L^{p^e} (x_{1},\ldots, x_{n})]_s$ is $1$ since so is
  $[L:L^{p^e}]_s$).
  Hence it holds that
  \[
  L=K'_{sep}(x_{1},\ldots, x_{n}) = K'_{sep}(x_{1},\ldots, x_{m}),
  \]
  where the second equality follows from $x_{m+1}, \ldots,x_n \in K' \subset K'_{sep}$.
  On the other hand,
  $K'_{sep} = K'(w)$ holds for some $w \in K'_{sep} $ since $K'_{sep} / K'$ is separable and finite.
  Applying \cite[Theorem 14]{Jacobson} to $K'(w, x_m) / K'$,
  we can take $\tilde{x}_m \in K'(w, x_m) $ such that $K'(w, x_m) =K'(\tilde{x}_m)$
  (we use the assumption $m \geq 1$ here).
  In fact,
  we can take $\tilde{x}_m = w + t x_m$ for general $t \in \kk$ by the proof of \cite[Theorem 14]{Jacobson}.
  Then
  \[
  d x_1,\ldots, d x_{m-1}, d \tilde{x}_m = d w + t d x_m ,d x_{m+1},\ldots,d x_n
  \]
  form a basis of $\Omega_{L / \kk}$
  since $t \in \kk$ is general.
  Hence $x_1, \ldots,x_{m-1}, \tilde{x}_m, x_{m+1},\ldots, x_{n}$ form a separating transcendence basis of $L / \kk$
  and it holds that
  \begin{align*}
    L= K'_{sep}(x_1,\ldots, x_{m}) &= K'(w) (x_1,\ldots, x_{m}) \\
    &=K' (x_1,\ldots, x_{m-1},\tilde{x}_m) \\
    &= K(x_1, \ldots,x_{m-1}, \tilde{x}_m, x_{m+1},\ldots, x_{n}).
  \end{align*}
  The last statement of this lemma follows from $x_1,\ldots,x_r \in K$.
\end{proof}

We can also show a geometric version of \autoref{lem_number_of_generators},
which we will use to prove \autoref{thm:family} in \autoref{sec_fibers}.

\begin{lem}\label{lem_number_of_generators_geom}
  Let $X \subset \PP^N$ be an $n$-dimensional projective variety over $\kk$ and
  let $f : X \dashrightarrow Y $ be a dominant rational map to a projective variety $Y$ with $m=\dim Y \geq 1$.
  Set $r= \rank f$ and  take an integer $M$ with $ \max \{n-m+1, n-r\} \leq M \leq N$.
  For a general linear projection $\pi : \PP^N \dashrightarrow \PP^{M}$,
  $(f,\pi |_X) : X \dashrightarrow Y \times \PP^{M}$ is birational onto the image.
\end{lem}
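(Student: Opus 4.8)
The plan is to translate the birationality of $(f,\pi|_X)$ into a statement about function fields and then mimic the proof of \autoref{lem_number_of_generators}, replacing its abstract generators by general linear forms. Write $L=K(X)$ and $K=f^*K(Y)\subset L$, so that $\trdeg_\kk L=n$, $\trdeg_\kk K=m$ and $\rk_L(\delta_{L/K})=r$. The map $(f,\pi|_X)$ is birational onto its image if and only if $L$ equals the subfield generated over $K$ by the pullbacks of the coordinate functions of $\PP^{M}$; since $X\subset\PN$, the field $L$ is generated over $\kk$ (hence over $K$) by the affine coordinates $\theta_i=(Z^i/Z^0)|_X$ (choosing $Z^0$ not identically zero on $X$), and for the purpose of field generation the coordinates of a general linear projection may be taken to be general $\kk$-linear combinations $u_k=c_{k0}+\sum_i c_{ki}\theta_i$ of $1,\theta_1,\dots,\theta_N$. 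By a standard spreading-out argument (and since $\kk$ is infinite) it suffices to treat the generic projection: letting the $c_{ki}$ be independent indeterminates, put $\kk'=\kk(c_{ki})$, $L'=L\kk'$, $K'=K\kk'$, and prove $L'=K'(u_1,\dots,u_M)$. As $\kk'/\kk$ is separable, the integers $n,m,r$ are unchanged over $\kk'$, and $M=\max\{n-m+1,\,n-r\}$ is exactly the number of generators of $L/K$ produced by \autoref{lem_number_of_generators}.

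Having reduced to this field identity, I would establish it in the same three movements as \autoref{lem_number_of_generators}. The transcendence step: the first $n-m$ of the generic $u_k$ are algebraically independent over $K'$ and form a transcendence basis of $L'/K'$ (a general linear projection of the $(n-m)$-dimensional generic fibre of $f$ to $\A^{n-m}$ is dominant), so $L'$ is finite over $K'(u_1,\dots,u_{n-m})$. The separability step: the differentials $du_k=\sum_i c_{ki}\,d\theta_i\in\Omega_{L'/\kk'}$ are general $L'$-linear combinations of the $d\theta_i$, which span the $n$-dimensional space $\Omega_{L'/\kk'}$; since $\dim_{L'}\im(\delta_{L'/K'})=r$ and $M\geq n-r$, the $M$ general differentials $du_k$ together with $\im(\delta_{L'/K'})$ span $\Omega_{L'/\kk'}$. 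Hence $\Omega_{L'/K'(u_1,\dots,u_M)}=0$, so $L'/K'(u_1,\dots,u_M)$ is separable, and, being also algebraic, separable finite.

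The remaining and genuinely delicate point is to improve ``separable finite'' to ``degree one'', i.e.\ $L'=K'(u_1,\dots,u_M)$. When $r=m$ the count $M=n-m+1$ gives one generic generator beyond the $n-m$ used for the transcendence basis, and this spare generator serves as a primitive element for the residual separable extension, finishing the proof. The hard case is $r\leq m-1$, where $M=n-r$ is exactly the number of general differentials needed to span, leaving no spare generator; there I must show that the general linear forms simultaneously provide the inseparable generators and absorb the separable part. This is precisely the geometric counterpart of the Jacobson primitive-element step $\tilde x_m=w+t x_m$ in \autoref{lem_number_of_generators}: the role of the general scalar $t$ is now played by the generic projection coefficients $c_{ki}$, so that at each stage a generic linear form $u_k$ generates the compositum of a separable generator with an inseparable one. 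Verifying that the genericity of the projection indeed realises the ``general $t$'' hypothesis of \cite[Theorem 14]{Jacobson} at every absorption step is the main obstacle; granting it, the minimal generation of $L'/K'$ by the $M$ general linear forms follows exactly as in \autoref{lem_number_of_generators}, and spreading back out yields the birationality of $(f,\pi|_X)$ for general $\pi$.
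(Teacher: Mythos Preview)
Your proposal takes a genuinely different route from the paper, and the gap you yourself flag in the ``degree-one'' step is real.

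The paper's argument is purely geometric and much shorter. After observing, as you do, that $M\geq n-r$ makes $(f,\pi|_X)$ separable and generically finite, it proves generic bijectivity directly. For general $x\in X$ with image $(y,z)$, decompose the fibre $f^{-1}(y)=\bigcup_i F_i$ into reduced irreducible components; each $F_i$ has dimension $n-m<M$ (this is where $M\geq n-m+1$ enters), so the general projection $\pi$ restricts to a birational map $F_i\dashrightarrow\pi(F_i)\subset\PP^{M}$, and moreover $\overline{\pi(F_i)}\ne\overline{\pi(F_j)}$ for $i\ne j$. Hence $z=\pi(x)$ lies only in the image of the component $F_0$ containing $x$, and birationality of $\pi|_{F_0}$ forces $x$ to be the unique preimage of $(y,z)$. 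No field-theoretic primitive-element manoeuvre is needed at all.

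Your field-theoretic approach, by contrast, tries to rerun the proof of \autoref{lem_number_of_generators} with general linear forms in place of the abstract generators. The transcendence and separability steps go through, but the structure of that proof does not transfer to the degree-one step. There the first $r$ generators are deliberately chosen in $K$, and the single Jacobson absorption $\tilde x_m=w+tx_m$ succeeds because only \emph{one} of the remaining ``new'' generators has to carry the separable primitive element $w$. General linear forms in the $\theta_i$ lie in $L$, not in $K$, and cannot be steered into the shape $w+tx_m$; in the critical case $r\leq m-1$, the minimal value $M=n-r$ gives exactly enough differentials to span $\Omega_{L'/\kk'}$ modulo $\im\delta_{L'/K'}$, leaving no spare form to perform the absorption. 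Establishing $L'=K'(u_1,\dots,u_M)$ in this situation would require an argument you have not supplied, and ``granting it'' is precisely the missing content. The paper's fibre argument sidesteps this difficulty entirely, which is what makes it preferable here.
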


\begin{proof}
  Since $M \geq n-r = n-\rank f$,
  $(f,\pi |_X) : X \dashrightarrow Y \times \PP^{M}$ is separable and generically finite for general $\pi$.
  Hence it suffices to show that $(f,\pi |_X) $ is generically bijective.

  Let $x \in X$ be a general point and set $(y,z)=(f(x),\pi(x)) \in Y \times \PP^{M}$.
  We show that $(f,\pi |_X)^{-1}(y,z) =\{x\}$.

  Let $f^{-1}(y)=\bigcup F_i$ be the decomposition into the irreducible components with the reduced structures.
  Take a component $F_0$ such that $x \in F_0$.
  Since $y$ is general,
  $\dim F_i = n-m < M$ holds for each $i$.
  Hence $\pi |_{F_i} : F_i \dashrightarrow \pi(F_i) \subset \PP^{M}$ is birational because $\pi$ is a general projection.
  Furthermore,
  $\overline{\pi(F_i)} \neq \overline{\pi(F_j)} $ for $i \neq j$ by the generality of $\pi$.
  Hence $z=\pi(x)$ is not contained in $\pi(F_i)$ for any $i \neq 0$
  since $x \in f^{-1}(y)$ is general.

  Take $x' \in (f,\pi |_X)^{-1}(y,z) $.
  Since $z=\pi(x')$ is not contained in $\pi(F_i)$ for any $i \neq 0$,
  we have $x' \in F_0$.
  Since $\pi |_{F_0} : F_0 \dashrightarrow \pi(F_0) \subset \PP^{M}$ is birational and $x \in F_0$ is general,
  $\pi(x')=y=\pi(x)$ means $x'=x$ and this lemma holds.
\end{proof}

Now we can prove Theorems \ref{thm:anyY}.

\begin{proof}[Proof of \autoref{thm:anyY}]
  Let $\pi_1 : \Spec L \rightarrow Y$ be the morphism induced by the field extension $L /K(Y)$.
  By \autoref{lem_number_of_generators},
  we can take a separating transcendence basis $x_1,\ldots,x_n \in L$ of $L / \kk$ such that $L=K(Y)(x_1,\ldots,x_n)$.
  Let $\pi_2 : \Spec L \arw \A^n$ be the morphism induced by $\kk[x_1,\ldots,x_n] \subset L$. 
  Let $\Go$ be as in \autoref{sec:notation}.
  By choosing $Z^0,\ldots,Z^N \in H^0(\PP^N,\sO(1))$ generally,
  we may assume that $ Y \cap \G^{\circ} \neq \emptyset$.
  We consider an inclusion
  $(Y \cap \G^{\circ}) \times \A^n \hookrightarrow \G^{\circ} \times  \cP(Q^{\vee})$
  given by the embedding $\A^n \hookrightarrow \cP(Q^{\vee}) : (x_1,\ldots,x_n) \mapsto [1:x_1 : \cdots : x_n]$.
  We recall $\Go \times \cP (Q\spcheck) = \PP_{\! \! *} (\sQ^{\vee} |_{\Go} )  = \sU|_{\Go} \subset \sU$.
  We define $X' \subset \calu$
  to be the closure of the image of the composite morphism
  \[
  (\pi_1,\pi_2) : \Spec L \arw (Y \cap \G^{\circ}) \times \A^n
  \hookrightarrow \calu.
  \]

  Since $L=K(Y)(x_1,\ldots,x_n)$,
  the morphism $(\pi_1,\pi_2) : \Spec L \arw X'$ is birational
  and the extension $K(X') / K(Y)$ induced by $\pr_1 : X' \twoheadrightarrow Y \subset \GN$ coincides with $L/K(Y)$.
  Thus it suffices to show that $X'$ satisfies conditions in \autoref{thm:GaussImage}.

  Since $x_1,\ldots,x_n$ form a separating transcendence basis of $L / \kk$,
  $\pr_2 : X' \arw \PP^N$ is generically finite and separable by the local description \ref{eq:pr_2-U-to-PN}.
  Since $\pr_1 : X' \arw \GN$ is of rank $0$ (i.e., $\delta_{K(X')/K(Y)} = \delta_{L/K(Y)} = 0$),
  \ref{thm:GaussImage:shr} in \autoref{thm:GaussImage} is satisfied by \autoref{rem_Phi=0}.
  Hence \autoref{thm:anyY} holds.
\end{proof}

\begin{ex}\label{ex_Y_with_r>1}
  In this example,
  we see that we cannot replace the condition $\delta_{L/K(Y)} = 0$ by the inseparability of $L/K(Y)$ in \autoref{thm:anyY}.

  Assume $p >0$.
  Let $Y \subset \G(2, \PP^3)$ be the surface given as the closure of the image of
  an embedding $\A^2  \hookrightarrow \G(2,\PP^3)$, which maps $(x,y) \in \A^2$
  to the $2$-plane in $\PP^3$ spanned by the $3$ points corresponding to the row vectors of
  \begin{align*}
    \begin{bmatrix}
      1 & 0 &0  & 1
      \\
      0 & 1 & 0 & x
      \\
      0 & 0 & 1 & y
    \end{bmatrix}.
  \end{align*}
  Set $L=\kk (x,y^{1/p}) $,
  which contains $K(Y)= \kk(x,y)$.
  We show that there is \emph{no} surface $X \subset \PP^3$
  with $K(X)=L$ such that $\overline{\gamma_X(X)} =Y$
  and the extension $K(X)/ K(Y)$ induced by $\gamma_X$ coincides with given $L / K(Y)$.

  Suppose there exists such $X \subset \PP^3$ and let
  $X' \subset \calu \subset \G(2,\PP^3) \times \PP^3$ be the graph of $\gamma_X$.
  We use the notation in \autoref{sec:notation}
  with the setting
  \[
  (n, N) = (2, 3) \ \text{ and }\
  (a_0^3, a_1^3, a_2^3) = (1, x, y).
  \]
  From \autoref{thm:GaussImage},
  $\mathscr{O}_{\calu}(-1) |_{X'} \hookrightarrow  \pr_1^* \sQ^{\vee} \arw \Omega_{X'} \otimes \pr_1^* \sS^{\vee}$
  is zero at a general point of $X'$.
  Since $\pr_1(X') \cap  \G^{\circ} = Y \cap \G^{\circ} \neq \emptyset$,
  so is $X' \cap \calu |_{\G^{\circ}}$.
  Hence
  \[
  0= z^0 d a_0^3 + z^1 d a_1^3 + z^2 d a_2^3 = z^1 d x
  \] holds in $\Omega_{K(X')/\kk}=\Omega_{L/\kk}$ by \autoref{lem_loc_description_phi}.
  Since $d x \neq 0 $ in $\Omega_{L/\kk}$,
  it holds that $z^1 =0$ on $X'$.
  By \ref{eq:pr_2-U-to-PN},
  this means that $X=\pr_2(X')$ is contained in the hyperplane $(Z^1=0) \subset \PP^3$.
  Thus $X=(Z^1=0) $ holds and $\gamma_X $ is a constant map,
  which is a contradiction.
\end{ex}

In the proof of \autoref{thm:anyY},
the condition (iii) in \autoref{thm:GaussImage} immediately follows from the assumption that
$\delta_{L/K(Y)} = 0$. In the case of \autoref{thm_construction_for_insep},
we choose the generators of $L/K$
and an embedding $\Spec K \hookrightarrow \G(n,\PP^{n+1})$ carefully
and apply \autoref{thm:GaussImage}.

\begin{proof}[Proof of \autoref{thm_construction_for_insep}]
  In order to prove this theorem, it suffices to show the case $N=n+1$ (see \autoref{thm_construction_for_insep:rem} below).
  Thus we will construct a hypersurface $X \subset \PP^{n+1}$ whose Gauss map induces
  the given extension $L/K$.

  Set $m=\trdeg_{\kk} (K)$ and $r = \rk_L(\delta_{L/K})$.
  Since $L/K$ is inseparable,
  $0 \leq r \leq m-1$ holds.
  By \autoref{lem_number_of_generators},
  there exists a separating transcendence basis $x_{1},\ldots, x_{n}$ of $L / \kk$
  with $x_1,\ldots,x_{r} \in K$ and $L=K(x_{1},\ldots,x_n)=K(x_{r+1},\ldots,x_n)$.

  Since $d x_1,\ldots,d x_r \in \Omega_{K/\kk}$ are linearly independent,
  there exist $y_{r+1},\ldots,y_m \in K$ such that
  $x_1,\ldots,x_r,y_{r+1}, \ldots, y_m$ form a separating transcendence basis of $K / \kk$.
  Since $K / \kk(x_1,\ldots,x_r,y_{r+1}, \ldots, y_m)$ is separable and finite,
  we can take $ a \in K$ such that $K = \kk(x_1,\ldots,x_r,y_{r+1}, \ldots, y_m,a)$.

  Since $d x_1,\ldots , d x_r$ form a basis of the image of $\delta_{L/K}: \Omega_{K/\kk} \otimes_{K} L \arw \Omega_{L/\kk}$ as an $L$-vector space,
  \begin{align}\label{eq_b_i}
    d a + x_{r+1} d y_{r+1} + \cdots + x_m d y_m = \sum_{i=1}^r b_i d x_i
  \end{align}
  holds for some $b_i \in L$ as elements in $\Omega_{L/\kk}$.

  Now we construct an embedding $\Spec L \rightarrow \sU \subset \Gr(n, \PP^{n+1}) \times \PP^{n+1}$ such that the closure of its image
  $X' \subset \sU$
  satisfies the condition
  $\sum_{\Ri} z^i \pr_1^*(d a_{i}^{n+1}) =0$   in \autoref{lem_loc_description_phi}.
  The following claim plays a key role.

  \begin{claim}\label{clm_choice_of_f}
    Assume $p \geq 3$ or $r$ is even.
    Then, there exists $f \in \kk(x_1,\ldots,x_r)$ such that
    $f_{x_1}-b_1, \ldots, f_{x_r} -b_r, x_{r+1},\ldots,x_n$ form a separating transcendence basis of $L/\kk$ for $f_{x_i } := \partial f / \partial x_i$.
  \end{claim}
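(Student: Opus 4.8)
The plan is to reduce \autoref{clm_choice_of_f} to the invertibility of an explicit $r\times r$ matrix and then to solve that linear-algebra problem separately according to the characteristic.

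First I would reformulate the conclusion. By \autoref{def_sep_tr_basis}, the elements $f_{x_1}-b_1,\dots,f_{x_r}-b_r,x_{r+1},\dots,x_n$ form a separating transcendence basis of $L/\kk$ if and only if their differentials form an $L$-basis of $\Omega_{L/\kk}$. Since $dx_1,\dots,dx_n$ is already such a basis (as $x_1,\dots,x_n$ is a separating transcendence basis) and since $dx_{r+1},\dots,dx_n$ occur among the new differentials, this is equivalent to the invertibility of the $r\times r$ matrix built from the $dx_1,\dots,dx_r$-components of $d(f_{x_i}-b_i)$. Writing $db_i=\sum_{j=1}^n\beta_{ij}\,dx_j$ and $H(f)=(\partial^2 f/\partial x_i\partial x_j)_{1\le i,j\le r}$ for the Hessian of $f$ (note $f\in\kk(x_1,\dots,x_r)$, so $df_{x_i}=\sum_{j\le r}f_{x_ix_j}\,dx_j$), the relevant matrix is $M=H(f)-B$, where $B=(\beta_{ij})_{1\le i,j\le r}$. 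Thus the claim reduces to producing $f\in\kk(x_1,\dots,x_r)$ with $\det\bigl(H(f)-B\bigr)\ne 0$ in $L$.

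The second step is to record that $M$ is symmetric. The Hessian $H(f)$ is symmetric because mixed partials commute. More importantly, $B$ is symmetric, which I would prove by applying the exterior derivative $d\colon\Omega_{L/\kk}\to\bigwedge^2\Omega_{L/\kk}$ to \ref{eq_b_i}: since $d(da)=0$, this yields $\sum_{k=r+1}^m dx_k\wedge dy_k=\sum_{i=1}^r db_i\wedge dx_i$. Comparing coefficients of $dx_a\wedge dx_b$ for $a<b\le r$, every summand on the left carries a factor $dx_k$ with $k>r$ and so contributes nothing, while the right contributes $\beta_{ba}-\beta_{ab}$; hence $\beta_{ab}=\beta_{ba}$ and $M=H(f)-B$ is symmetric.

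Finally I would produce $f$ by a genericity argument, splitting on the characteristic and using throughout that a nonzero polynomial with coefficients in $L$ cannot vanish on all of $\kk^s$ (expand it in a $\kk$-basis of $L$ and use that $\kk$ is infinite). If $p\ge 3$, take $f=\sum_{i=1}^r c_ix_i^2$, so that $H(f)=\mathrm{diag}(2c_1,\dots,2c_r)$; the polynomial $\det\bigl(\mathrm{diag}(T_1,\dots,T_r)-B\bigr)$ has leading term $T_1\cdots T_r$, hence is nonzero, and one can choose $c_i\in\kk$ (with $T_i=2c_i$) making $\det(H(f)-B)\ne 0$. If $p=2$ and $r$ is even, all pure second derivatives vanish identically, so the diagonal of $H(f)$ is forced to be zero; instead take $f=\sum_{i<j}c_{ij}x_ix_j$, giving a symmetric $H(f)$ with zero diagonal and prescribed off-diagonal entries $c_{ij}\in\kk$. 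Then $H(f)-B$ is symmetric, and its determinant, viewed as a polynomial in the $c_{ij}$, is nonzero because the Pfaffian-type monomial $\prod_{k}c_{2k-1,2k}^2$ survives; the infinite-field argument again yields a good choice of $c_{ij}$.

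The main obstacle is precisely the characteristic-$2$ case and the origin of the parity hypothesis: there the Hessian necessarily has zero diagonal, so when $B$ also has zero diagonal the matrix $H(f)-B$ is alternating, whose determinant vanishes identically for odd $r$. This is exactly why the hypothesis ``$p\ge 3$ or $r$ even'' is needed (and reflects the failure described in \cite[Remark 5.3]{gmaptoric}); for even $r$ the Pfaffian is genuinely nonzero, which is the computation that makes the argument go through. The other point requiring care is the closedness relation establishing the symmetry of $B$, since it is what guarantees $M$ lands in the class of symmetric matrices to which this analysis applies.
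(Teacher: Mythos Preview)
Your argument is correct and follows the same strategy as the paper: choose $f$ to be a quadratic form with scalar parameters, and show that the resulting determinant (equivalently, the wedge product \ref{eq:df-b}) is a nonzero polynomial in those parameters, so a general choice works. The paper is slightly more economical, using a single parameter $t$ in each case: $f=t(x_1^2+\cdots+x_r^2)$ for $p\ge 3$ and $f=t\sum_{l}x_{2l-1}x_{2l}$ for $r$ even, so that $\det(H(f)-B)$ becomes a degree-$r$ polynomial in $t$ with leading coefficient $2^r$, respectively $(-1)^{r/2}$.

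One remark: your derivation of the symmetry of $B$ from \ref{eq_b_i} is correct, but it is not actually needed. In both regimes the monomial you isolate (namely $\prod_i T_i$, respectively $\prod_k c_{2k-1,2k}^2$) arises from a single permutation in the Leibniz expansion of $\det(H(f)-B)$, and its coefficient is $1$, respectively $(-1)^{r/2}$, regardless of whether $B$ is symmetric. So the symmetry step, while a nice observation explaining why the odd-$r$ case in characteristic $2$ can genuinely fail, plays no role in the proof of the claim itself.
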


  \begin{proof}[Proof of \autoref{clm_choice_of_f}]
    First, we consider the case $p \geq 3$.
    Set $f = t(x_1^2 + \cdots +x_r^2)$ for general $t \in \kk$.
    Then $f_{x_i}= 2t x_i$ for $1 \leq i \leq r$ and there exists $g(t) \in L$ such that
    \begin{equation}\label{eq:df-b}
      d (f_{x_1} -b_1) \wedge \cdots \wedge d (f_{x_r} -b_r) \wedge d x_{r+1} \wedge \cdots \wedge d x_n
      =g(t) d x_1 \wedge \cdots \wedge d x_n
    \end{equation}
    holds as elements in $ \bigwedge^n \Omega_{L/\kk} $.
    Since $g(t)$ is written as a nonzero polynomial of $t$ with degree $r$ (the coefficient of $t^r$ is $2^r \neq 0$),
    $g(t) \neq 0$ for general $t \in \kk$.
    Thus
    $f_{x_1}-b_1, \ldots, f_{x_r} -b_r, x_{r+1},\ldots,x_m$ form a separating transcendence basis of $L/\kk$.

    Next, we consider the case $r$ is even. Set $s=r/2$ and
    $f = t \cdot \sum_{1 \leq l \leq s} x_{2l-1}x_{2l}$
    for general $t \in \kk$.
    Then
    $f_{x_{2l-1}}= t x_{2l}$,
    $f_{x_{2l}}= t x_{2l-1}$, and
    we take $g(t) \in L$ as in \ref{eq:df-b}. Then
    $g(t)$ is written as a nonzero polynomial of $t$ with degree $r$ (the coefficient of $t^r$ is $(-1)^s \neq 0$ in this case). Hence
    $g(t) \neq 0$ for general $t \in \kk$.
    Thus
    $f_{x_1}-b_1, \ldots, f_{x_r} -b_r, x_{r+1},\ldots,x_n$ form a separating transcendence basis of $L/\kk$.
  \end{proof}

  Take $f \in \kk(x_1,\ldots,x_r)$ as in \autoref{clm_choice_of_f}
  and let $z^1,\ldots,z^n \in L$ be
  \[
  f_{x_1}-b_1, \ldots, f_{x_r} -b_r, x_{r+1},\ldots,x_n
  \]
  respectively.
  Then it holds that
  \begin{equation}\label{eq_linear_condition}
    \begin{aligned}
      d(a-f) &+ \sum_{i=1}^r z^i d x_i  + \sum_{i=r+1}^{m} z^{i} d y_{i} \\
      &= d(a -f) + \sum_{i=1}^r (f_{x_i} -b_i)d x_i +\sum_{i=r+1}^{m} x_{i} d y_{i} \\
      &=da -df + df - \sum_{i=1}^r b_i d x_i  +\sum_{i=r+1}^{m} x_{i} d y_{i} =0,
    \end{aligned}
  \end{equation}
  where the second equality follows from $df = \sum_{i=1}^r f_{x_i} d x_i$, and the last equality follows from \ref{eq_b_i}.

  Now we use the notation of \autoref{sec:notation}.
  Let $Y \subset \G(n,\PP^{n+1})$ be the closure of the morphism
  \[
  \mu : \Spec K \arw  \Go \subset \G(n,\PP^{n+1})
  \]
  defined by
  \begin{align*}\label{a_i^m+1}
    a_i^{n+1} =  \left\{ 
      \begin{array}{cl} 
        a-f & \text{if } i=0 ,\\ 
        x_i & \text{if } 1 \leq i \leq r ,\\
        y_i & \text{if } r+1 \leq i \leq m, \\
        0 & \text{if } m+1 \leq i \leq n.
      \end{array} \right.
  \end{align*}
  Since $\kk(a-f,x_1,\ldots, x_r,y_{r+1},\ldots,y_m )=K$,
  $\mu : \Spec K \arw Y \subset \G(n,\PP^{n+1})$ is birational.

  Let $\nu_1 :\Spec L \arw Y$ be the composition of $\Spec L \arw \Spec K$ and $\mu$,
  and let $\nu_2 : \Spec L \arw \PP^{n}$ be the morphism 
  defined by $[1: z^1: \cdots :z^n] \in \PP^{n}=\PP_{\! \! *} (Q^{\vee})$.
  We define $X' \subset \calu \subset \G(n,\PP^{n+1}) \times \PP^{n+1}$ to be the closure of the morphism   \[
  \nu : \Spec L \stackrel{(\nu_1,\nu_2)}{\longrightarrow} Y \times \PP^n \subset \G(n,\PP^{n+1}) \times \PP^{n} \stackrel{\bir}{\dashrightarrow} \calu,
  \]
  where the last birational map is obtained from $\Go \times \cP(Q\spcheck) = \sU|_{\Go}$.
  Then $\nu : \Spec L \arw X'$ is birational since $K(X')=K(Y)(z^1,\ldots,z^n ) =K(z^1,\ldots,z^n ) $ and 
  \[
  L \supset K(z^1,\ldots,z^n ) \supset K(z^{r+1} ,\ldots,z^n) =K(x_{r+1},\ldots,x_n) =L,
  \]
  where the last equality follows from the choice of $x_i$ (see the first paragraph of this proof).
  Since $z^1,\ldots,z^n$ form a separating transcendence basis of $L/\kk$ by \autoref{clm_choice_of_f},
  $\pr_2 : X' \arw \PP^{n+1}$ is separable and generically finite by the local description \ref{eq:pr_2-U-to-PN}.
  By \ref{eq_linear_condition} and the local description \autoref{lem_loc_description_phi},
  this $X'$ satisfies condition (ii) in \autoref{thm:GaussImage}
  and we obtain this theorem.
\end{proof}

\begin{rem}\label{thm_construction_for_insep:rem}
  Let $X \subsetneq \PP^{N}$ be a non-degenerate projective variety in
  $p = \chara\kk > 0$.
  Let $N_1 > N$ be an integer.
  Then we have non-degenerate $X_1 \subset \PP^{N_1}$ 
  such that
  $K(X_1)/K(\gamma_{X_1}(X_1))$ induced by $\gamma_{X_1}$
  coincides with $K(X)/K(\gamma_X(X))$,
  as follows.

  Let $n := \dim(X)$.
  Choosing suitable homogeneous coordinates on $\PN$, we may take a local parametrization
  $[1:z^1:\dots:z^n:f^{n+1}:\dots:f^{N}]$ of $X$ around $x = [1:0:\dots:0] \in X$,
  where $z^1,\dots,z^n$ form a system of parameters of
  the regular local ring $\sO_{X,x}$
  and $f^{n+1}, \dots, f^{N} \in \sO_{X,x}$.
  In this setting, we have 
  \begin{equation}\label{eq:KgammaX}
    K := K(\gamma_X(X)) = \kk \left(\Bigl\{f^j - \sum_{\Ri} z^i f^j_{z^i}\Bigr\}_{\Rj}, \set{f^j_{z^i}}_{\Rij} \right),
  \end{equation}
  where $f^j_{z^i} := \diff{f^j}{z^i}$
  (for example, see \cite[\textsection{}2]{fukaji2010}).
  Then we can define a rational map
  $\mu: X \dashrightarrow \PP^{N_1}$ parametrized by
  $[1:z^1:\dots:z^n:f^{n+1}:\dots:f^{N}:g^{N+1}:\dots:g^{N_1}]$
  with general $g^{N+1},\dots,g^{N_1} \in K^p$.
  By definition, $X$ is birational to $X_1:=\overline{\mu(X)} \subset \PP^{N_1}$.
  In addition, $K(\gamma_{X_1}(X_1))$ coincides with $K = K(\gamma_{X}(X))$ since
  $g^k_{z^i} = 0$.  
  Since $g^{N+1},\dots,g^{N_1} \in K^p$ are general,
  $X_1 \subset \PP^N$ is non-degenerate.
  Thus the statement follows.
\end{rem}

\section{Fibers of Gauss maps}
\label{sec_fibers}

Throughout this section, we assume $p = \chara \kk > 0$.
In \textsection{}3, we prove
\autoref{thm:anyY} by using \autoref{thm:GaussImage} and \autoref{lem_number_of_generators}.
\autoref{thm:family} is similarly proved by using \autoref{thm:GaussImage} and \autoref{lem_number_of_generators_geom} as follows.

\begin{proof}[Proof of \autoref{thm:family}]
  \begin{inparaenum}[\noindent\itshape Step 1.]
  \item 
    Let $\Frob : Y^{(1/p)} \arw Y$ be the Frobenius morphism
    and let $\calf' :=  Y^{(1/p)}  \times_{Y} \calf$ be the fiber product with the following diagram,
    \begin{equation*}
      \begin{split}
        \xymatrix{          \calf'  \ar[d]_(.45){f'} \ar[r]^(.5){\Frob'} & \calf  \ar[d]^{f}
          \\
          Y^{(1/p)} \ar[r]^{\Frob}   & Y,
        }      \end{split}
    \end{equation*}
    where $f'$ and $\Frob'$ are the induced morphisms.
    Note that  $\calf'$ is contained in $Y^{(1/p)} \times \PP^{N'}$
    since $\calf \subset Y \times \PP^{N'}$.
    For the first projection
    $\bar{f}': Y^{(1/p)} \times \PP^{N'} \rightarrow Y^{(1/p)}$,
    we have $f' = \bar{f}' |_{\calf'}$.

    Fix an embedding $Y^{(1/p)} \hookrightarrow \PP^{N''}$ and
    let
    \[
    \calf' \subset Y^{(1/p)}  \times \PP^{N'} \hookrightarrow \PP^{N''} \times \PP^{N'} \hookrightarrow \PP^{\overline{N}}
    \]
    be the Segre embedding.
    Let $\pi : \PP^{\overline{N}} \dashrightarrow \PP^n$ be a general linear projection.
    Then $\pi |_{\calf'}: \calf' \dashrightarrow \PP^n$ is separable and generically finite
    since $\dim \calf'=\dim \calf =n$.
    \\

  \item \label{item:n>Nd}
    Note that, if $n \geq N'$, then
    the restriction of $\pi$ on $\{y'\} \times \PP^{N'} \subset \PP^{\overline{N}}$
    is a linear embedding for general $y' \in Y^{(1/p)} $.
    \\

  \item 
    Let $\sU \subset \GN \times \PN$ be the universal family of $\GN$.
    By identifying $\PP^n$ with $\cP (Q\spcheck)$,
    we have a birational map from
    $\GN \times \PP^n$ to
    $\Go \times \cP(Q\spcheck) = \sU|_{\Go} \subset \sU$
    (see \autoref{sec:notation}).
    Let $\phi : Y^{(1/p)}  \times \PP^{N'} \dashrightarrow \calu \subset \G(n,\PP^N) \times \PP^N$ be the rational map defined as
    \[
    \phi : Y^{(1/p)}  \times \PP^{N'} \stackrel{(\Frob \circ \bar{f}' , \pi)}{\dashrightarrow} Y \times \PP^n  \subset \GN \times \PP^n 
    \stackrel{\bir}{\dashrightarrow} \calu.
    \]
    Set $X'\subset \sU$ to be the closure of $\phi(\calf')$.
    Since $\dim Y \geq 1$,
    we can apply \autoref{lem_number_of_generators_geom} to $\Frob \circ f' : \calf' \arw Y $. 
    Hence $\phi |_{\calf'} :  \calf' \dashrightarrow X'$ is birational.

    Since $ \pi |_{\calf'}: \calf' \dashrightarrow \PP^n$ is separable and generically finite,
    so is the second projection
    $\pr_2 : X' \arw \PN$
    (note that $\GN \times \PP^n \dashrightarrow \sU \rightarrow \PN$ is described
    on an open subset $\Go \times \PP^n \subset \GN \times \PP^n$
    by $((a_i^j), [z^0:z^1:\dots:z^n]) \mapsto [z^0:z^1:\dots:z^n:\cdots]$
    as in \ref{eq:pr_2-U-to-PN}, where the homogeneous coordinates $z^0,z^1,\dots,z^n$ on $\Pn$ appears in those on $\PN$).

    Since $\Frob \circ f'$ is of rank $0$,
    so is the first projection $\pr_1 :X' \arw Y$.
    Hence $X'$ satisfies condition \ref{thm:GaussImage:shr} in \autoref{thm:GaussImage}
    by \autoref{rem_Phi=0}.
    \\

  \item 
    Set $X := \pr_2(X') \subset \PP^N$.
    By \autoref{thm:GaussImage},
    we have the following commutative diagram.
    \[
    \xymatrix{      \calf' \ar@{-->}@/^1pc/@<1ex>[rr]^{\bir} \ar@{-->}[r]^(.6){\varphi} \ar@{->>}[rd]_{\Frob \circ f'  } & X' \ar[d]^(.4){\pr_1} \ar[r]^(.4){\pr_2} & X    \ar@{-->}[ld]^{\gamma_X} \ar@{}[r]|{\kern-1ex\mbox{$\subset$}}
      & \PN
      \\
      & Y . & 
    }    \]
    Since $X'$ is the graph of $\gamma_X$, $\pr_2$ is birational.
    Hence $\pr_2 \circ \phi : \calf' \dashrightarrow X$ is birational as well.

    Set $h := \Frob' \circ ( \pr_2 \circ \phi )^{-1} : X \dashrightarrow \calf$.
    Since $  ( \pr_2 \circ \phi )^{-1}$  is birational and $\Frob' $ is purely inseparable and finite,
    $h$ is purely inseparable and generically finite, i.e., $h$ is generically bijective.
    By the above commutative diagram, it holds that
    \[
    \gamma_X = (\Frob \circ f'  )\circ ( \pr_2 \circ \phi )^{-1} =( f \circ \Frob') \circ ( \pr_2 \circ \phi )^{-1} = f  \circ h.
    \]
    Hence the former statement of the theorem is proved.
    \\

  \item 
    Now assume $n \geq N'$.
    For general $y \in  Y$,
    let $y' \in Y^{(1/p)}$ be the unique point over $y$.
    Since the restriction
    $\pi  |_{\{y'\} \times \PP^{N'} }:  \{y'\} \times \PP^{N'}  \arw \PP^n$ appeared in Step~2 and
    \[
    \{y\} \times \PP^n \subset \GN \times \PP^n  \stackrel{\bir}{\dashrightarrow} \calu \stackrel{\pr_2}{\longrightarrow} \PP^N
    \]
    are linear embeddings,
    $F'_{y'}:=(f')^{-1}(y')_{red} \subset \{y'\} \times \PP^{N'}$ is projective equivalent to 
    $\overline{\pr_2 \circ \phi(F'_{y'})} =\overline{ \gamma_X^{-1}(y)}_{red} \subset \PP^N$.
    Since $F'_{y'} \subset \{y'\} \times \PP^{N'}$ is projectively equivalent to $F_y \subset \{y\} \times \PP^{N'}$,
    we have the latter statement of the theorem.
  \end{inparaenum}
\end{proof}

\begin{ex}\label{thm:family:rem}
  Assume $p >0$. We give some applications of \autoref{thm:family}
  by taking special $\calf \subset Y \times \PP^{N'}$
  with the surjective first projection $f: \calf \rightarrow Y$.
  We denote by $F_y$ the fiber of $f$ at $y \in Y$ with the reduced structure.

  \begin{inparaenum}
  \item 
    Let $Y \subset \G(n, \PP^N)$ and $F \subset \PP^{N'} $ be projective varieties with $\dim Y \geq 1$ and $\dim Y + \dim F =n$.
    Assume $n \geq N'$. Then, by taking $\calf = Y \times F$, we have $X \subset \PN$ such that the image of $\gamma$ equal to $Y$, and
    a general fiber of the Gauss map $\gamma$ of $X$ with the reduced structure is projectively equivalent to $F$.

    As mentioned in \autoref{sec:introduction},
    by (1), we can recover Fukasawa's construction of general fibers of $\gamma$ \cite[Theorem 1]{Fukasawa2006}, and also control the image of $\gamma$ at the same time.

  \item \autoref{thm:family} gives examples of $X$'s
    such that general fibers of $\gamma_X$ with the reduced structures are not isomorphic each other.
    For example, we may take $\calf\rightarrow Y$ to be an elliptic fibration with $F_{y} \not\simeq F_{y'}$ at general $y$ and $y'$.

    We note that Fukasawa \cite[Example~2.3]{Fukasawa2006-2} gave an explicit example of a Gauss map such that all isomorphism classes of elliptic curves appears as general fibers.
    \\

    Next we consider irreducible components of a general fiber of $\gamma$.
    Kaji \cite[Example 4.1]{Kaji1986} \cite{Kaji1989} and Rathmann \cite[Example~2.13]{Rathmann} gave examples of Gauss maps whose general fiber is a union of two or more points.
    In \cite[Remark~3.2]{Fukasawa-rims}, Fukasawa mentioned without proof
    that his construction can give $X$ such that a general fiber of $\gamma_X$ is the union of two or more $F$'s.
    We give generalizations of their results as follows.

    Let $s: Z \dashrightarrow Y \subset \GN$ be a separable, generically finite, dominant rational map with $\dim Y \geq 1$,
    and let $\calg \subset Z \times \PP^{N'}$ with $n \geq N'$ be an $n$-dimensional subvariety such that the first projection
    $g: \calg \rightarrow Z$ is surjective.
    We denote by $G_z$ the fiber of $g$ at $z \in Z$ with the reduced structure.

  \item
    We set $\calf \subset Y \times \PP^{N'}$
    to be the closure of the image of $\calg$
    under $s \times \id_{\PP^{N'}}$.
    Then $f: \calf \rightarrow Y$ satisfies
    the assumption of \autoref{thm:family}.
    Thus we can construct $X$ for this $\calf$.
    For general $y \in Y$, $F_y$
    is the union of fibers $G_z$ with $z \in s^{-1}(y)$.

    For example, by taking $\calg \rightarrow Z$ as an elliptic fibration as in (2),
    we can construct $X$ such that a general fiber of $\gamma$ of $X$ with reduced structure
    is the union of irreducible components which are not isomorphic each other.

  \item It may happen that $G_z = G_{z'} \subset \PP^{N'}$ with $z \neq z' \in s^{-1}(y)$ in (3). To deal with such cases, we modify the construction of (3) as follows.

    We take an embedding $Z \subset \PP^{N''}$ and the Segre embedding
    $\PP^{N''} \times \PP^{N'} \subset \PP^{\bar N}$, and consider
    a general linear projection $\pi:\PP^{\bar N} \dashrightarrow \Pn$.
    Then we set $\calf \subset Y \times \Pn$
    to be the closure of the image of $\calg$ under the induced map
    $Z \times \PP^{N'} \dashrightarrow Y \times \Pn$ sending
    $(z,a)$ to $(s(z),\pi(z,a))$.
    From \autoref{lem_number_of_generators_geom},
    this $\calf \subset Y \times \Pn$ is birational to $\calg$.
    Hence we can construct $X$ such that $F_y$ is the union of
    $\pi(G_z)$ with $z \in s^{-1}(y)$,
    where $\pi(G_z) \neq \pi(G_{z'}) \subset \Pn$ for $z \neq z'$.
    We note that $\pi(G_z)$ is projectively equivalent to $G_z$
    since $n \geq N'$.

  \item
    Let $\calf \subset Y \times \PP^{N'}$ 
    satisfy the assumption of \autoref{thm:family}.
    Applying (4) to $\calg = \calf \times_Y Z$,
    we have a new $\calf^s \subset Y \times \PP^{n}$
    such that $F^{s}_y$ at general $y \in Y$
    is the union of $c = \deg(s)$ copies of $F_{y}$.

    For example, setting $\calf = Y \times F$ in (1),
    we can construct $X$ for $\calf^s$
    such that the number of irreducible components of a general fiber of
    $\gamma$ of $X$ is equal to $c$;
    more precisely, the fiber is a union of $c$ copies of $F$.
    The fiber can be a \emph{disjoint} union
    of $c$ copies of $F$
    if $N' > 2\cdot \dim(F)$.
    
  \end{inparaenum}
\end{ex}

\begin{rem}\label{rem_non-degen}
  In \autoref{thm:family}, the projective variety
  $X \subset \PP^N$ is non-degenerate if so is
  the union $\bigcup_{[L] \in Y} L  \subset \PP^N$ for the subvariety $Y \subset \GN$.
  The reason is as follows:
  If $X \subset \PP^N$ is degenerate,
  i.e., $X$ is contained in some hyperplane $H \subset \PP^N$,
  then every embedded tangent space $\TT_xX$ is also contained in $H$.
  Since $Y = \overline{\gamma(X)}$, it follows that
  $\bigcup_{[L] \in Y} L  \subset H$.
\end{rem}

The rank of the Gauss map of $X$ which we construct in the proof of \autoref{thm:family} is zero.
Roughly,
the following proposition states that we can increase the ranks of Gauss maps
without changing general fibers.

\begin{prop}\label{prop_rank}\label{thm_fib_r_c}
  Let $X_1 \subset \PP^{N_1}$ be a non-degenerate projective variety
  and let $N, r \geq 0$ be integers such that $(p,r) \neq (2,1)$ and
  \begin{align*}    N - N_1 \geq  \left\{ 
      \begin{array}{cl} 
        r+1 & \text{if } p \geq 3, \text{ or } \ p=2, r : \text{even} ,\\ 
        r+2 & \text{if } p=2, r : \text{odd}.\\ 
      \end{array} \right.
  \end{align*}
  Then there exists a non-degenerate projective variety $X \subset \PP^N$ and a dominant rational map
  $\mu : \gamma_{X}(X) \dashrightarrow \gamma_{X_1} (X_1) $ 
  such that $\rank \gamma_X = \rank \gamma_{X_1} +r$ and
  the fiber ${\overline{\gamma_{X}^{-1}(y)}} \subset \PP^{N}$ of $\gamma_{X}$ over general $y \in \gamma_{X}(X) $
  is projectively equivalent to ${\overline{\gamma_{X_1}^{-1}(\mu(y))}} \subset \PP^{N_1}$.
\end{prop}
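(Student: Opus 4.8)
The plan is to leave $X_1$ essentially intact and glue onto it an $r$--dimensional \emph{separable} gadget that raises the rank by exactly $r$ while preserving every fibre up to a linear embedding; the resulting $X$ will have $\dim X = \dim X_1 + r$ and fibres of the same dimension $\dim X_1 - \rank\gamma_{X_1}$ as before. I would first reduce to the minimal codimension $N = N_1 + r + 1$ (resp.\ $N = N_1 + r + 2$ when $p = 2$ and $r$ is odd). Indeed, once such an $X$ is produced in the minimal $\PP^{N_1+r+1}$, one embeds it into the full $\PN$ by appending coordinates lying in $K(\gamma_X(X))^p$ exactly as in \autoref{thm_construction_for_insep:rem}: these coordinates are constant on the fibres of $\gamma_X$, so they change neither $K(\gamma_X(X))$ (hence neither $\rank\gamma_X$ nor $\mu$) nor the projective type of the fibres, while making $X$ non-degenerate in $\PN$.

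For the core construction I would take a local parametrization $[1:z^1:\dots:z^n:f^{n+1}:\dots:f^{N_1}]$ of $X_1$ as in \autoref{thm_construction_for_insep:rem}, with $z^1,\dots,z^n$ a separating transcendence basis of $K(X_1)/\kk$, and introduce $r$ new independent parameters $w^1,\dots,w^r$. Then $X$ is defined as the closure of the image of the parametrization obtained by adjoining the $w^l$ together with one further coordinate $Q=Q(w)$, a quadratic form whose Hessian has rank $r$: for $p\geq 3$ take $Q=\sum_l (w^l)^2$, and for $p=2$ with $r$ even take $Q=\sum_l w^{2l-1}w^{2l}$. In the remaining case $p=2$, $r$ odd (so $r\geq 3$), a single quadratic cannot have odd Hessian rank because in characteristic $2$ the Hessian of a quadratic is alternating; here I would instead adjoin \emph{two} coordinates, e.g.\ $Q_1=\sum_{l=1}^{(r-1)/2} w^{2l-1}w^{2l}$ and $Q_2=w^{r-1}w^{r}$, arranged so that the linear forms $\partial Q_a/\partial w^l$ jointly span the full dual of the $w$--space. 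This is exactly where the hypotheses $(p,r)\neq(2,1)$ and the jump from $r+1$ to $r+2$ in the bound on $N-N_1$ enter.

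To read off the rank I would apply the formula \ref{eq:KgammaX} to this parametrization. Since the $f^j$ depend only on $z$ and $Q$ (resp.\ $Q_1,Q_2$) only on $w$, the generators split and one obtains $K(\gamma_X(X)) = K(\gamma_{X_1}(X_1))(\{\partial Q/\partial w^l\}_l)$, and likewise with the $\partial Q_a/\partial w^l$ in the odd case. The differentials $d(\partial Q/\partial w^l)$ lie in the $\kk$--span of $dw^1,\dots,dw^r$ inside $\Omega_{K(X)/\kk}$, which is a direct summand complementary to the span of $\{dk : k\in K(\gamma_{X_1}(X_1))\}$; hence the rank of the differential grows by exactly the Hessian rank $r$, so that $\rank\gamma_X = \rank\gamma_{X_1}+r$. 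The field inclusion $K(\gamma_{X_1}(X_1))\subset K(\gamma_X(X))$ then induces the required dominant rational map $\mu:\gamma_X(X)\dashrightarrow\gamma_{X_1}(X_1)$.

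Finally, for the fibres: over a general $y$ the tangent space of $X$ is pinned down by the $\gamma_{X_1}$--datum (a function of $z$) together with the linear forms $\partial Q/\partial w^l$; as these forms span the full dual of the $w$--space, they determine $w$ uniquely, so $\overline{\gamma_X^{-1}(y)}$ consists of the points $[1:z:w_0:f(z):Q(w_0)]$ with $w_0$ fixed and $z$ ranging over $\overline{\gamma_{X_1}^{-1}(\mu(y))}$. Adjoining the constant coordinates $w_0$ and $Q(w_0)$ is a linear embedding $\PP^{N_1}\hookrightarrow\PN$, so the two fibres are projectively equivalent. I expect the main obstacle to be the bookkeeping in the case $p=2$, $r$ odd --- checking that two alternating Hessians can be made to span an odd-dimensional space while still determining $w$ (so that the fibre is preserved) --- and, throughout, verifying that the rank increment is \emph{exactly} $r$, not merely at least $r$.
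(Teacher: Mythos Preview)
Your approach is correct and follows the same core idea as the paper: glue onto $X_1$ an $r$-dimensional piece whose Gauss map is birational, so that the rank increases by exactly $r$ while fibers are preserved. The paper packages this more geometrically. It takes an abstract non-degenerate $X_2 \subset \PP^{N-N_1}$ of dimension $r$ with $\gamma_{X_2}$ birational (your explicit quadrics are precisely such an $X_2$; the paper's \autoref{thm_fib_r_c:rem} writes down essentially the same examples), defines $X$ as the closure of $\phi(X_1 \times X_2)$ under the birational concatenation map $\phi: \PP^{N_1} \times \PP^{N-N_1} \dashrightarrow \PP^N$, and proves a companion lemma (\autoref{rem:thm_fib_r_c:projs}) giving a birational map $\psi: \G(n-r,\PP^{N_1}) \times \G(r,\PP^{N-N_1}) \dashrightarrow \G(n,\PP^N)$ onto its image. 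The identity $\gamma_X = \psi \circ (\gamma_{X_1} \times \gamma_{X_2}) \circ \phi^{-1}$ then makes both the rank statement and the fiber statement immediate, since the fiber of $\gamma_{X_1} \times \gamma_{X_2}$ over $(y_1,y_2)$ is visibly $\gamma_{X_1}^{-1}(y_1) \times \{x_2\}$ and $\phi$ restricted to $\PP^{N_1} \times \{x_2\}$ is a linear embedding. This also lets the paper work directly in $\PP^N$ for arbitrary $N$, bypassing your reduction to the minimal ambient dimension via \autoref{thm_construction_for_insep:rem}; your reduction is valid (the appended coordinates in $K(\gamma_X(X))^p$ are indeed fiberwise constant and so preserve the projective type of the fibers), but the product formulation avoids the extra step. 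In short: same construction, with the paper trading your explicit use of formula~\ref{eq:KgammaX} for a cleaner tangent-space factorization via $\psi$.
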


\begin{rem}
  In characteristic $2$,
  it is known that the rank of the Gauss map of any projective variety \emph{cannot} be equal to 1 (see \cite[Remark 5.3]{gmaptoric}, for example).
  This is the reason why we assume $(p,r) \neq(2,1)$ in the above proposition.
\end{rem}

\begin{rem}\label{prop_rank:rem}
  By combining \autoref{thm:family} and \autoref{prop_rank},
  we see that a general fiber of a Gauss map can be non-linear even if the rank of the Gauss map is non-zero.
  For example, 
  taking $X_1 \subset \PN$ as in \autoref{thm:family:rem} and
  applying \autoref{prop_rank},
  we can 
  increase the ranks of Gauss maps
  without changing general fibers.
\end{rem}

To show \autoref{thm_fib_r_c}, we prepare \autoref{rem:thm_fib_r_c:projs} below.
Let $N_1 < N$ be positive integers.
We consider a birational map
\begin{align*}
  \phi : \PP^{N_1} \times \PP^{N-N_1} &\dashrightarrow \PP^N
  \\
  \left( [1: u^1: \cdots : u^{N_1}] , [1: v^1: \cdots: v^{N-N_1}] \right) &\mapsto [1: u^1: \cdots : u^{N_1} : v^1: \cdots: v^{N-N_1}],
\end{align*}
where $[1: u^1: \cdots : u^{N_1}] \in \PP^{N_1}$
(resp. $[1: v^1: \cdots: v^{N-N_1}] \in \PP^{N-N_1}$)
is the affine coordinates on the standard affine subspace
$\A^{N_1} \subset \PP^{N_1}$
(resp. $\A^{N-N_1} \subset \PP^{N-N_1}$).  
We set  $\pi_i = p_i \circ \phi^{-1}  $ for $i=1,2$,
where $p_i$ is the projection from
$\PP^{N_1} \times \PP^{N-N_1}$ to the $i$-th factor.   
More explicitly,
\begin{align*}
  \pi_1 ([Z^0 : \cdots : Z^N]) & := [Z^0 : Z^1 : \cdots :Z^{N_1} ] \in \PP^{N_1} ,\\
  \ \pi_2 ([Z^0 : \cdots : Z^N]) &:= [Z^0 : Z^{N_1+1} \cdots :Z^{N}] \in \PP^{N-N_1} 
\end{align*}
for the homogeneous coordinate $[Z^0 : \cdots : Z^N] \in \PP^N$.

\begin{lem}\label{rem:thm_fib_r_c:projs}
  These linear projections $\pi_1$ and $\pi_2$ induce a natural birational map
  \[
  \psi : {\G(n-r, \PP^{N_1}) \times \G(r, \PP^{N-N_1})} \dashrightarrow \G(n,\PP^N)
  \]
  onto the image.
\end{lem}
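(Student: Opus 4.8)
The plan is to define $\psi$ explicitly as a ``product of affine-linear subspaces'' and to exhibit the pair of projections $(\pi_1,\pi_2)$ as a rational left inverse. First I would pass to the affine chart $U = (Z^0 \neq 0) \subset \PP^N$ with coordinates $z^i = Z^i/Z^0$, and observe that under $\phi$ the chart $U \cong \A^N$ is identified with $\A^{N_1} \times \A^{N-N_1}$, where $\A^{N_1} = (Z^0 \neq 0) \subset \PP^{N_1}$ carries the coordinates $z^1,\dots,z^{N_1}$ and $\A^{N-N_1}$ carries $z^{N_1+1},\dots,z^N$. Under this identification $\pi_1$ and $\pi_2$ restrict precisely to the two factor projections, which reduces the entire statement to elementary affine linear algebra.

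Next I would construct $\psi$. Assuming the Grassmannians are nonempty (that is, $n-r \leq N_1$ and $r \leq N-N_1$, as holds in the application to \autoref{thm_fib_r_c}), let $V$ be the open subset of $\G(n-r,\PP^{N_1}) \times \G(r,\PP^{N-N_1})$ of pairs $(L_1,L_2)$ with $L_1 \not\subset (Z^0=0)$ and $L_2 \not\subset (Z^0=0)$. For such a pair, $L_1 \cap \A^{N_1}$ and $L_2 \cap \A^{N-N_1}$ are affine-linear subspaces of dimensions $n-r$ and $r$, so their product $(L_1 \cap \A^{N_1}) \times (L_2 \cap \A^{N-N_1}) \subset \A^N$ is affine-linear of dimension $n$; I set $\psi(L_1,L_2)$ to be its closure in $\PP^N$, which is an $n$-plane. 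Writing $L_1$ and $L_2$ by affine-linear equations in the respective coordinates shows that $\psi(L_1,L_2)$ is cut out by the union of the resulting $N-n$ equations, so $\psi$ depends algebraically on $(L_1,L_2)$ and defines a rational map $V \dashrightarrow \G(n,\PP^N)$.

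Finally I would verify birationality onto the image by running the projections backwards. Since $\psi(L_1,L_2)$ has affine part $(L_1 \cap \A^{N_1}) \times (L_2 \cap \A^{N-N_1})$, the factor projections recover the factors, giving $\overline{\pi_1(\psi(L_1,L_2))} = L_1$ and $\overline{\pi_2(\psi(L_1,L_2))} = L_2$. Hence the rational map $\rho := (\overline{\pi}_1,\overline{\pi}_2)$ satisfies $\rho \circ \psi = \id_V$. For a general point $w$ of the image $\overline{\psi(V)}$ we may write $w = \psi(v)$, whence $\psi(\rho(w)) = \psi(\rho(\psi(v))) = \psi(v) = w$, so $\psi \circ \rho = \id$ on a dense open subset of $\overline{\psi(V)}$ as well. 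Therefore $\psi$ and $\rho$ are mutually inverse birational maps, which proves that $\psi$ is birational onto its image.

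I do not expect any step to be a genuine obstacle; the difficulties are entirely in the bookkeeping. The points I would treat most carefully are that $\pi_1$ and $\pi_2$ share the coordinate $Z^0$ yet glue to the clean identification $U \cong \A^{N_1} \times \A^{N-N_1}$, that the projective closure of an affine product is again a \emph{linear} subspace of the expected dimension, and that the genericity conditions defining $V$ are exactly those needed for both the affine intersections and the projections to have the correct dimensions. Once these identifications are set up, the argument is routine projective linear algebra.
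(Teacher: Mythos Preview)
Your proposal is correct and follows essentially the same approach as the paper: both work in the affine chart $(Z^0\neq 0)$, and your definition of $\psi(L_1,L_2)$ as the closure of $(L_1\cap\A^{N_1})\times(L_2\cap\A^{N-N_1})$ coincides with the paper's definition $\psi([L_1],[L_2])=[\overline{\pi_1^{-1}(L_1)}\cap\overline{\pi_2^{-1}(L_2)}]$ on this chart. The only cosmetic difference is in the birationality step: the paper writes $\psi$ explicitly in matrix coordinates on $\Go_1\times\Go_2$ and reads off injectivity, whereas you use $(\overline{\pi}_1,\overline{\pi}_2)$ as a rational inverse --- these amount to the same observation that the factors can be recovered from $\psi(L_1,L_2)$.
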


\begin{proof}
  For a general $(n-r)$-plane $L_1 \subset \PP^{N_1}$
  and a general $r$-plane $L_2 \subset \PP^{N-N_1}$,
  the intersection 
  $\overline{\pi_1^{-1}(L_1)} \cap \overline{\pi_2^{-1}(L_2)} \subset \PN$ is a linear subvariety of codimension
  \[
  (N_1-n+r) + (N-N_1-r)=N-n,
  \]
  i.e., it is of dimension $n$.
  Hence we can define $\psi$ as a rational map by
  \[
  \psi ([L_1], [L_2] ) =\left[ \overline{\pi_1^{-1}(L_1)} \cap \overline{\pi_2^{-1}(L_2)} \right] \in \G(n,\PP^N).
  \]

  Next, in order to show $\psi$ is birational,
  we describe $\psi$ locally as follows. As in \autoref{sec:notation}, we consider standard affine open subsets
  $\Go_1 \subset \G(n-r, \PP^{N_1})$ and $\Go_2 \subset \G(r, \PP^{N-N_1})$, that is,
  $\Go_1$ (resp.\ $\Go_2$) is the set of $(n-r)$-planes $L_1 \subset \PP^{N_1}$ (resp.\ $r$-planes $L_2 \subset \PP^{N-N_1}$) spanned by the points corresponding to the rows of
  \[
  [E_{n-r+1} \  A_1] \quad (\text{resp. } [E_{r+1}  \  A_2])
  \]
  for an $(n-r+1) \times (N_1 -(n-r))$ matrix $A_1$ (resp.\ an $(r+1) \times (N-N_1 -r)$ matrix $A_2$),
  where $E_t$ is the $t \times t$ unit matrix. Recall that
  $\Go_i$ is identified with an affine space by sending $L_i$ to $A_i$ for $i=1,2$.
  Then $\overline{\pi_1^{-1}(L_1)} \cap \overline{\pi_2^{-1}(L_2)}$ is spanned by the $n+1$ points corresponding to the rows of
  \[
  \begin{bmatrix}
    1&  \bm 0 & \bm a_1 &   \bm 0  & \bm a_2
    \\
    ^{t} \bm 0 &  E_{n-r}  &   A'_1 & O  & O
    \\ 
    ^{t} \bm 0 &   O    & O &  E_{r}   & A'_2
  \end{bmatrix},
  \] 
  where $\bm 0$ and $O$ are the raw zero vector and zero matrix of suitable sizes,
  and $\bm a_i, A'_i$ are given by
  \[
  A_i=
  \begin{bmatrix}
    \bm a_i
    \\
    A'_i
  \end{bmatrix}.
  \]
  Now we can regard $\psi$ locally as a morphism from
  $\Go_1 \times \Go_2$ to $\A^{(n+1)(N-n)}$
  (the latter affine space corresponds to a suitable open subset of $\GN$),
  which sends $(A_1, A_2)$ to the $(n+1) \times (N-n)$ matrix
  \[
  \begin{bmatrix}
    \bm a_1 & \bm a_2
    \\
    A'_1 & O
    \\ 
    O  & A'_2
  \end{bmatrix}.
  \]
  Hence $\psi$ is birational onto the image. 
\end{proof}

\begin{proof}[Proof of \autoref{thm_fib_r_c}]
  By assumption,
  we can take a non-degenerate projective variety ${X_2} \subset \PP^{N - N_1}$ with $\dim {X_2} =r$
  such that the Gauss map $\gamma_{X_2}$ of ${X_2}$ is birational
  (for example, see \cite[Examples 3.9 and 3.10]{gmaptoric} and see also \autoref{thm_fib_r_c:rem} below).
  
  Now we use the notation of \autoref{rem:thm_fib_r_c:projs}.
  We may assume that $\psi$ is defined at general points of $\gamma_{X_1}(X_1) \times \gamma_{X_2}(X_2)$ and $\psi|_{\gamma_{X_1}(X_1) \times \gamma_{X_2}(X_2)}$ is birational onto its image,
  after suitable linear transformations on projective spaces $\PP^{N_1}, \PP^{N-N_1}$.
  We define $X \subset \PP^N$ to be the closure $\overline{\phi(X_1 \times X_2)}$.
  Then $\phi|_{X_1\times X_2} : X_1 \times X_2 \dashrightarrow X$ is birational and its inverse is
  $(\pi_1|_{X}, \pi_2|_{X})$.
  We note that the non-degeneracy of $X \subset \PP^N$ follows from that of $X_1$ and $X_2$ since
  $X$ and $X_1 \times X_2$ can be identified on the open subsets $\A^N \simeq \A^{N_1} \times \A^{N-N_1}$ by the definition of $\phi$.

  Let $x \in X$ be a general point and set $x_i := \pi_i(x) \in X_i$ ($i=1,2$).
  Since $(\pi_i \circ \phi) |_{X_1 \times X_2}= p_i|_{X_1 \times X_2}$,
  $\pi_i|_{X}: X \dashrightarrow X_i$ is separable.
  Thus we have $\pi_i(\TT_xX) = \TT_{x_i}X_i$.
  Since $\psi$ is defined at general points of
  $\gamma_{X_1}(X_1) \times \gamma_{X_2}(X_2)$,
  $\bigcap_{i=1,2} \overline{\pi_i^{-1}(\TT_{x_i}X_i)}$ is of dimension $n$,
  which means that this intersection coincides with $\TT_xX$.
  In other words, $\gamma_X(x) = \psi(\gamma_{X_1}(x_1), \gamma_{X_2}(x_2))$ holds.
  Therefore, we have the following commutative diagram
  \begin{equation}\label{eq:diag-phi2}
    \begin{split}
      \xymatrix{        X  \ar@{-->}[r]^(.45){\gamma_X} & \G(n,\PP^N) 
        \\
        X_1 \times X_2 \ar@{-->}[u]^{\phi|_{X_1\times X_2}} \ar@{-->}[r]^{\kern-5em\gamma_{X_1} \times \gamma_{X_2}} & \G(n-r, \PP^{N_1}) \times \G(r, \PP^{N-N_1}) . \ar@{-->}[u]^{\psi}
      }    \end{split}
  \end{equation}
  Since $\phi|_{X_1\times X_2}$ is birational,
  $\overline{\gamma_X(X)} = \overline{\psi (\gamma_{X_1}(X_1) \times \gamma_{X_2}(X_2)) }$ holds.
  Since $\psi|_{\gamma_{X_1}(X_1) \times \gamma_{X_2}(X_2)}$ is birational onto the image,
  we have the inverse birational map
  \[
  \psi|_{\gamma_{X_1}(X_1) \times \gamma_{X_2}(X_2)}^{-1} :
  \gamma_X(X) \dashrightarrow \gamma_{X_1}(X_1) \times \gamma_{X_2}(X_2).
  \]
  Let $\mu : \gamma_X(X) \dashrightarrow \gamma_{X_1}(X_1)$ be the composite map of 
  $ \psi|_{\gamma_{X_1}(X_1) \times \gamma_{X_2}(X_2)}^{-1} $ and the first projection
  $\gamma_{X_1}(X_1) \times \gamma_{X_2}(X_2) \arw \gamma_{X_1}(X_1)$.

  Since $\gamma_{X_2} ,\phi |_{X_1 \times X_2},\psi|_{\gamma_{X_1}(X_1) \times \gamma_{X_2}(X_2)}$ are birational,
  we have $\rank \gamma_X =\rank( \gamma_{X_1} \times \gamma_{X_2}) = \rank \gamma_{X_1} +r$.
  
  Let $(y_1,y_2) \in \gamma_{X_1}(X_1) \times \gamma_{X_2}(X_2)$ be a general point.
  Since $\gamma_{X_2}$ is birational,
  $\gamma_{X_2}^{-1}(y_2)$ consists of one point $x_2 \in X_2$.
  Hence the fiber of $\gamma_{X_1} \times \gamma_{X_2}$ over $(y_1,y_2)$ is 
  $\gamma_{X_1}^{-1}(y_1) \times \{x_2\} \subset X_1 \times X_2 \subset \PP^{N_1} \times \PP^{N_2}$.
  By the diagram \ref{eq:diag-phi2},
  $\phi (\gamma_{X_1}^{-1}(y_1) \times \{x_2\}) =\gamma_X^{-1}(y)$ for $y:=\psi(y_1,y_2)$.
  Since $\phi |_{\PP^{N_1} \times \{x_2\}} : \PP^{N_1} \times \{x_2\} \arw \PP^N$ is a linear embedding by the definition of $\phi$,
  $\gamma_{X_1}^{-1}(y_1) \times \{ x_2\} \subset \PP^{N_1} \times \{x_2\}$ is projectively equivalent to $\gamma_X^{-1}(y) \subset \PP^N$.
  Since $y_1=\mu(y)$, this proposition is proved.
\end{proof}

\begin{rem}\label{thm_fib_r_c:rem}
  In the proof of \autoref{thm_fib_r_c},
  we can indeed take $X_2 \subset \PP^{N-N_1}$ whose function field is any given field $K_2$ of transcendental degree $r$ over $\kk$
  such that the Gauss map $\gamma_{X_2}$ is birational.
  Hence $X$ in the statement of \autoref{thm_fib_r_c} can be birational to $X_1 \times \Spec K_2$.

  For example, in the case when $p=2$, $r$ is odd, and $N-N_1 = r+2$,
  we take a separating transcendence basis $\set{z^1,\dots,z^r}$ of $K_2/\kk$ and
  take $f \in K_2$ such that $K_2=\kk(z^1,\dots,z^r)(f)$.
  Then we set $X_2$ to be the closure of the image of
  $\Spec(K_2) \rightarrow \PP^{r+2}$ given by
  \[
  [1:z^1:z^2:\dots:z^r:t\cdot f+ z^1z^2+z^3z^4+\dots+z^{r-2}z^{r-1}: z^{r-1}z^{r}]
  \]
  with general $t \in \kk$. From \ref{eq:KgammaX}, we find that $\gamma_{X_2}$ is separable and generically finite, and hence birational by \cite[Theorem~1]{expshr}.
\end{rem}

\section{Degeneracy maps and the second fundamental form}
\label{sec:degen-maps-second}

In this section,
we give an expression of degeneracy maps in the context of shrinking maps
by using the {second fundamental form}.
We work over an algebraically closed field of arbitrary characteristic.

Let $\gamma = \gamma_X: X \dashrightarrow \GN$ be the Gauss map of $X \subset \PN$.
We recall that $d_x\gamma: T_xX \rightarrow T_{\gamma(x)}\GN$
is the differential of $\gamma$.
Note that there is a one-to-one correspondence between $l$-planes in $\TT_xX$
containing $x$
and $l$-dimensional vector subspaces in $T_xX$.

\begin{defn}\label{thm:def-kappa}
  A \emph{degeneracy map} $\kappa$ of $X$ is defined as
  a rational map from $X$ to a Grassmann variety which sends a general point $x \in X$ to
  the linear subvariety in $\TT_xX$ corresponding to $\ker(d_x\gamma) \subset T_xX$.
  This $\kappa$ is also described as follows.

  For the differential $d \gamma : T_{X^{sm}} \rightarrow \gamma^*T_{\GN}$
  and the exact sequence
  \begin{equation}\label{eq:O-Q1-TX}
    0 \rightarrow \sO_{X^{sm}} \rightarrow \gamma^*\sQ\spcheck(1)  \xrightarrow{\theta(1)} T_{X^{sm}} \rightarrow 0
  \end{equation}
  (see the diagram~\ref{eq:QLS-TTN} in the proof of \autoref{thm:GaussImage}),   we can take an open subset $\Xo \subset X$ such that
  $\ker(d\gamma \circ \theta(1))|_{\Xo}$
  is a subbundle of   $\gamma^*\sQ\spcheck(1)|_{\Xo} \subset H^0(\PN, \sO(1)) \otimes \sO_{\Xo}(1)$,
  which induces a morphism from $\Xo$ to a Grassmann variety due to universality.
  This morphism is nothing but $\kappa$.

  See \cite{FP}   for properties of $\kappa$ in the characteristic zero case.
\end{defn}

\begin{prop}\label{thm:kap-defby-shr}
  Let $X \subset \PN$ be a projective variety and
  let $X' \subset \GN \times \PN$ be the graph of the Gauss map $\gamma$ of $X$.
  Then the following three maps coincide:

  \begin{enumerate}
  \item the degeneracy map $\kappa$ of $X$,

  \item the shrinking map $\sigma_{X, \gamma}: X \dashrightarrow \Gr(n^{-}, N)$
    of $X$ with respect to the Gauss map $\gamma$ of $X$,

  \item the composite map $\sigma_{X', \pr_1} \circ (\gamma, \id_X)$,
    where $\sigma_{X', \pr_1}$ be the shrinking map of $X'$ with respect to $\pr_1: X' \rightarrow \GN$.
  \end{enumerate}
\end{prop}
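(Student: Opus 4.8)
The plan is to verify the two coincidences (2)$=$(3) and (1)$=$(2) separately; the first is immediate and the second carries the content. For (2)$=$(3), I would invoke \autoref{thm:shr-map-sep-map} directly: the graph map $(\gamma,\id_X)\colon X\dashrightarrow X'$ is birational with inverse $\pr_2$, hence dominant and separable, and the Gauss map factors as $\gamma=\pr_1\circ(\gamma,\id_X)$. Applying that remark with $Z=X$, $Z'=X'$, $f=\gamma$, $f'=\pr_1$ and $g=(\gamma,\id_X)$ yields $\sigma_{X,\gamma}=\sigma_{X',\pr_1}\circ(\gamma,\id_X)$, which is precisely (2)$=$(3).

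For (1)$=$(2) I would present both maps as classifying maps of subbundles of $\gamma^*\sQ^\vee$ and show the subbundles coincide. By \autoref{thm:def-kappa}, $\kappa$ is induced by $\ker(d\gamma\circ\theta(1))\subset\gamma^*\sQ^\vee(1)$, where $\theta\colon\gamma^*\sQ^\vee\to T_{X^{sm}}(-1)$ is the vertical map of \ref{eq:QLS-TTN}; up to the harmless twist by $\sO_X(1)$, which does not change the induced map to $\Gr(n^-,\PN)$, this is $\ker(d\gamma\circ\theta)\subset\gamma^*\sQ^\vee$. By \autoref{thm:def-shr-Zf} the shrinking map $\sigma_{X,\gamma}$ is induced by $\ker\Phi_\gamma\subset\gamma^*\sQ^\vee$, where, under the identification \ref{eq_identify_Phi}, $\Phi_\gamma$ sends a section $v$ to the homomorphism $t\mapsto(d\gamma(t))(v)$. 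Thus (1)$=$(2) reduces to the equality $\ker(d\gamma\circ\theta)=\ker\Phi_\gamma$ inside $\gamma^*\sQ^\vee$.

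I would deduce this from the symmetry of the second fundamental form. Consider the pairing $B(v,w):=(d\gamma(\theta(w)))(v)$ on $\gamma^*\sQ^\vee$, valued in $\gamma^*\sS^\vee(-1)$. Since $\theta$ is surjective onto $T_{X^{sm}}(-1)$, a section $v$ lies in $\ker\Phi_\gamma$ if and only if $B(v,w)=0$ for all $w$, whereas $v\in\ker(d\gamma\circ\theta)$ if and only if $B(w,v)=0$ for all $w$; so the two kernels agree once $B$ is symmetric. Symmetry moreover gives $\sO_X(-1)\subset\ker\Phi_\gamma$: a generator $v_0$ of $\ker\theta=\sO_X(-1)$ (cf.\ \ref{eq:O-Q1-TX}) satisfies $B(w,v_0)=(d\gamma(\theta(v_0)))(w)=0$, hence $B(v_0,w)=B(w,v_0)=0$ for all $w$; this is consistent with $\ker(d\gamma\circ\theta)\supset\ker\theta$, so both subbundles contain the tautological line and have equal rank. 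The proposition then follows, all three maps being classified by the single subbundle $\ker\Phi_\gamma=\ker(d\gamma\circ\theta)$.

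The main obstacle is therefore establishing $B(v,w)=B(w,v)$. I expect to verify it by the local computation of \autoref{sec:notation}: writing $X$ locally as $[1:z^1:\dots:z^n:f^{n+1}:\dots:f^N]$ as in \ref{eq:KgammaX}, one has $a_k^j=f^j_{z^k}$ and $a_0^j=f^j-\sum_i z^i f^j_{z^i}$, whence $\theta(q_i)=\partial/\partial z^i$ and $\theta(q_0)=-\sum_i z^i\,\partial/\partial z^i$; a direct substitution then expresses $B$ through the Hessians $(f^j_{z^iz^k})_{i,k}$, which are symmetric because mixed partials commute. The same symmetry is encoded intrinsically by the compatibility $\alpha_1=\alpha_2$ of \autoref{lem_alpha_1=alpha_2}, so either route closes the argument.
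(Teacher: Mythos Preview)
Your proposal is correct and follows essentially the same route as the paper: for (2)$=$(3) both you and the paper invoke \autoref{thm:shr-map-sep-map} via the birationality of $(\gamma,\id_X)$, and for (1)$=$(2) both reduce to the symmetry of the second fundamental form, the only cosmetic difference being that the paper descends to $\tau$ on $T_{X^\circ}\otimes T_{X^\circ}$ and compares $\tau_1,\tau_2$, whereas you work upstairs with $B$ on $\gamma^*\sQ^\vee\otimes\gamma^*\sQ^\vee$ (which is $\tau$ precomposed with $\theta\otimes\theta$). One small caveat: \autoref{lem_alpha_1=alpha_2} does not by itself encode the symmetry of the second fundamental form, so your ``intrinsic route'' aside is not quite on target---but your Hessian computation is the standard verification and is exactly what the paper points to in its remark, so the argument stands.
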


In order to prove \autoref{thm:kap-defby-shr},
we discuss the second fundamental form in terms of sheaf homomorphisms.
Let $\Xo \subset X$ be an open subset consisting of smooth points of $X$
and regard $\gamma$ as $\gamma|_{\Xo}$.
As in \autoref{sec:subv-univ-family}, we denote by $\sQ$ and $\sS$
the universal quotient bundle and subbundle on $\GN$ with the exact sequence \ref{eq_univ_seq_U}.
The differential
$d\gamma: T_{\Xo} \rightarrow \gamma^*T_{\GN} = \gamma^*\sHom(\sQ\spcheck, \sS\spcheck)$ corresponds to a homomorphism
\[
\widetilde{d\gamma}: T_{\Xo} \otimes \gamma^*\sQ\spcheck \rightarrow \gamma^*\sS\spcheck
\]
under the identification $\Hom(T_{\Xo}, \gamma^*\sHom(\sQ\spcheck, \sS\spcheck))
\simeq \Hom(T_{\Xo} \otimes \gamma^*\sQ\spcheck, \gamma^*\sS\spcheck)$.
Then we can check that
the composition of
$T_{\Xo} \otimes \sO_{\Xo} \hookrightarrow T_{\Xo} \otimes \gamma^*\sQ\spcheck(1)$ induced by  \ref{eq:O-Q1-TX}
and
$\widetilde{d\gamma}(1): T_{\Xo} \otimes \gamma^*\sQ\spcheck(1) \rightarrow \gamma^*\sS\spcheck(1)$
is the zero map. 
Hence a homomorphism, called \emph{the second fundamental form},
\[
\tau: T_{\Xo} \otimes T_{\Xo} \rightarrow \gamma^*\sS\spcheck(1)
\]
is induced.
By definition, $\widetilde{d\gamma}(1)$ factors through $\tau$.

\begin{rem}
  It is known that $\tau$
  is symmetric.
  For instance, this can be shown by taking local parametrization of $X$ as in
  \autoref{sec:notation} and \autoref{sec:equality-between-two}.
\end{rem}

We define $\tau_i: T_{\Xo} \rightarrow \sHom(T_{\Xo}, \gamma^*S\spcheck)(1)$ for $i=1,2$ by
\[
\tau_1(x) = [T_{\Xo} \rightarrow \gamma^*S\spcheck(1): y \mapsto \tau(x, y)],\
\tau_2(x) = [T_{\Xo} \rightarrow \gamma^*S\spcheck(1): y \mapsto \tau(y, x)].
\]
In fact, $\tau_1 = \tau_2$ because of the symmetry of $\tau$.

\begin{proof}[Proof of \autoref{thm:kap-defby-shr}]
  Since $(\gamma, \id_X): X \dashrightarrow X'$ is birational,
  $\sigma_{X, \gamma} = \sigma_{X', \pr_1} \circ (\gamma, \id_X)$
  follows from \autoref{thm:shr-map-sep-map}.
  Now we show $\kappa = \sigma_{X, \gamma}$.

  The homomorphism $d\gamma$ coincides with the composition of
  $\tau_1$ and the injection
  \[
  \sHom(T_{\Xo}, \gamma^*\sS\spcheck)(1) = \sHom(T_{\Xo}(-1), \gamma^*\sS\spcheck)
  \hookrightarrow \sHom(\gamma^*\sQ\spcheck, \gamma^*\sS\spcheck),
  \]
  which is induced from the surjection $\theta(1): \gamma^*\sQ\spcheck(1) \twoheadrightarrow T_{\Xo}$ in \ref{eq:O-Q1-TX}.
  In particular,
  we have $\ker (d \gamma)=\ker (\tau_1) \subset T_{X^{\circ}}$.
  Thus the degeneracy map $\kappa$
  is induced by the subsheaf 
  \[
  \theta^{-1} (\ker (d \gamma) (-1)) = \theta^{-1} (\ker (\tau_1)(-1))
  \]
  of $\gamma^*\sQ\spcheck \subset H^0(\PP^N,\sO(1))^{\vee} \otimes \sO_{X^{\circ}}$.
  On the other hand,
  the homomorphism $\Phi: \gamma^*\sQ\spcheck \rightarrow \sHom(T_{\Xo}, \gamma^*\sS\spcheck)$
  given in \autoref{thm:def-shr-Zf}
  corresponds to $\widetilde{d\gamma}$
  under the identification
  $\Hom(\gamma^*\sQ\spcheck, \sHom(T_{\Xo}, \gamma^*\sS\spcheck)) \simeq
  \Hom(T_{\Xo} \otimes \gamma^*\sQ\spcheck, \gamma^*\sS\spcheck)$.
  This implies that $\Phi(1)$ coincides with the composition of
  $\gamma^*\sQ\spcheck(1) \twoheadrightarrow T_{\Xo}$ and $\tau_2$.
  Hence $\ker \Phi = \theta^{-1} (\ker (\tau_2 )(-1))$ holds.
  Since the shrinking map $\sigma_{X,\gamma}$ is induced by $\ker \Phi  \subset H^0(\PP^N,\sO(1))^{\vee} \otimes \sO_{X^{\circ}}$,
  and since $\tau_1=\tau_2$ holds,
  we have $\kappa = \sigma_{X,\gamma}$.
\end{proof}

Combining \autoref{thm:GaussImage} and \autoref{thm:kap-defby-shr},
we recover the main theorem of the first author's paper \cite{expshr}.

\begin{cor}[{\cite[Theorem~3.1]{expshr}}]\label{thm:char-sepa}
  Let $X \subset \PN$ be an $n$-dimension projective variety,
  and let $Y \subset \GN$ be a projective variety.
  We set $\Gamma(X) \subset \calu_{\G(n,\P^N)} \subset \GN \times \PN$ to be the graph of the Gauss map $\gamma = \gamma_X$ of $X$ (see \autoref{sec:subv-univ-family} for definition),
  and set $\sigma_Y: Y \dashrightarrow {\Gr(n^{-},\PN)}$ to be the shrinking map of $Y$ with respect to $Y \hookrightarrow \GN$, where $n^{-} = n^{-}_{\sigma_Y} \leq n$.
  Then the following are equivalent:

  \begin{enumerate}
  \item
    $\gamma: X \dashrightarrow \GN$ is separable,
    and $Y = \overline{\gamma(X)}$.

  \item $\Gamma(X) = \sigma_Y^*{\calu}_{\Gr(n^{-},\PN)}$ in $\GN \times \PN$.

  \item The second projection $\sigma_Y^*{\calu}_{\Gr(n^{-},\PN)} \rightarrow \PN$
    is separable and generically finite onto its image,
    and the image is equal to $X$
    (in particular, its dimension is $n$).
  \end{enumerate}
\end{cor}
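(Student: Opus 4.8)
The plan is to establish the cycle $(1) \Rightarrow (2) \Rightarrow (3) \Rightarrow (1)$, using \autoref{thm:GaussImage} as the engine. The observation I would isolate first is that for $X' := \sigma_Y^*\calu_{\Gr(n^-,\PN)}$ the condition~\ref{thm:GaussImage:shr} of \autoref{thm:GaussImage} is automatic. Indeed, since the shrinking plane $\sigma_Y([L])$ is cut out inside $L$ (the kernel $\ker\Phi$ defining $\sigma_Y$ is a subsheaf of $\pr_1^*\sQ\spcheck$), we have $X' \subset \sU$, and $X'$ is irreducible of dimension $\dim Y + n^-$ because $\pr_1 : X' \arw Y$ is a $\PP^{n^-}$-bundle over the locus where $\sigma_Y$ is defined. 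Being such a bundle, $\pr_1$ has surjective differential at a general point, so $\rk \pr_1 = \dim Y$, i.e. $\pr_1$ is separable onto $Y$. Then \autoref{thm:shr-map-sep-map} gives $\sigma_{X',\pr_1} = \sigma_Y \circ \pr_1$, so $(\sigma_{X',\pr_1}, \pr_2)(X') \subset \calu_{\Gr(n^-,\PN)}$ amounts to the tautology $x \in \sigma_Y([L])$ for $([L],x) \in X'$. Hence condition~\ref{thm:GaussImage:shr} holds for $X'$ as soon as $\pr_2 : X' \arw \PN$ is separable and generically finite.

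For $(3) \Rightarrow (1)$: assuming $\pr_2 : X' \arw \PN$ is separable, generically finite, with image $X$ of dimension $n$, generic finiteness forces $\dim X' = n$, so \autoref{thm:GaussImage} applies; by the first paragraph its condition~\ref{thm:GaussImage:shr} holds, and we conclude that $X'$ is the graph of $\gamma_X$ with $X = \pr_2(X')$. Then \autoref{thm:graph-equiv} yields $\pr_1 = \gamma_X \circ \pr_2$, whence $\overline{\gamma_X(X)} = \pr_1(X') = Y$. Since $\pr_2$ is birational while $\pr_1$ is separable onto $Y$, we get $\rk \gamma_X = \rk \pr_1 = \dim Y = \dim \overline{\gamma_X(X)}$, i.e. $\gamma_X$ is separable.

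For $(1) \Rightarrow (2)$: with $\gamma$ separable onto $Y$, I would run the same machinery on the graph $\Gamma(X)$. Here $\pr_2$ is the birational inverse of $(\gamma,\id_X)$, and $\pr_1 = \gamma \circ \pr_2$ has rank $\dim Y$, so $\pr_1$ is separable onto $Y$ and $\sigma_{\Gamma(X),\pr_1} = \sigma_Y \circ \pr_1$ by \autoref{thm:shr-map-sep-map}. As $\Gamma(X)$ is a graph, condition~\ref{thm:GaussImage:shr} of \autoref{thm:GaussImage} holds, and it now reads $\Gamma(X) \subset \sigma_Y^*\calu_{\Gr(n^-,\PN)}$. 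To upgrade this inclusion of irreducible varieties to an equality I compare dimensions: $\dim \Gamma(X) = n$, while $\dim \sigma_Y^*\calu_{\Gr(n^-,\PN)} = \dim Y + n^-$. The needed identity is $n^- = n - \dim Y$, and this is exactly where \autoref{thm:kap-defby-shr} enters: the degeneracy map $\kappa$ equals $\sigma_{X,\gamma} = \sigma_Y \circ \gamma$, while by its definition $\kappa$ sends $x$ to the linear subvariety of $\TT_x X$ of dimension $\dim\ker(d_x\gamma)$; hence $n^- = \dim\ker(d_x\gamma) = n - \rk\gamma = n - \dim Y$. Both varieties therefore have dimension $n$, and the inclusion is an equality.

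Finally, $(2) \Rightarrow (3)$ is immediate: if $\Gamma(X) = \sigma_Y^*\calu_{\Gr(n^-,\PN)}$, then the second projection of $\sigma_Y^*\calu_{\Gr(n^-,\PN)}$ is $\pr_2$ on the graph, i.e. the inverse of $(\gamma,\id_X)$, which is birational onto $X$ and in particular separable, generically finite, with image $X$ of dimension $n$. The main obstacle is the equality step in $(1) \Rightarrow (2)$: turning condition~\ref{thm:GaussImage:shr} into the set-theoretic inclusion $\Gamma(X)\subset\sigma_Y^*\calu_{\Gr(n^-,\PN)}$ is clean, but promoting it to an equality rests on the dimension identity $n^- = n - \dim Y$, which in turn is precisely the degeneracy-equals-shrinking statement \autoref{thm:kap-defby-shr} combined with separability.
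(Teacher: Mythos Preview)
Your proof is correct and matches the paper's approach closely: both use \autoref{thm:GaussImage} together with the separability of $\pr_1$ on $\sigma_Y^*\calu_{\Gr(n^-,\PN)}$ (via \autoref{thm:shr-map-sep-map}) to verify condition~\ref{thm:GaussImage:shr}, and both invoke \autoref{thm:kap-defby-shr} to obtain $n^- = n - \dim Y$ for the dimension count in $(1)\Rightarrow(2)$. The only organizational difference is that the paper argues $(1)\Rightarrow(2)$ and $(3)\Rightarrow(2)$ with the remaining implications immediate, whereas you run the cycle $(1)\Rightarrow(2)\Rightarrow(3)\Rightarrow(1)$; the content is the same.
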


Note that, we have
the linearity of a general fiber of a separable Gauss map 
by the implication (1) $\Rightarrow$ (2)(e.g., see \cite[Proof of Corollary 3.7]{expshr}),
and have a characterization of images of separable Gauss maps by the equivalence (1) $ \Leftrightarrow $ (3) (see \cite[Corollary 3.15]{expshr}).

\begin{proof}[Proof of \autoref{thm:char-sepa}]
  (1) $\Rightarrow$ (2):
  Let $\kappa: X \dashrightarrow \G(n^-_{\kappa}, \P^N)$
  be the degeneracy map,
  where $n^{-}_{\kappa} =\rank ( \ker d \gamma )  $ for $d \gamma : T_X \arw \gamma^* T_Y$.
  Since $d \gamma$ is surjective,
  we have $n^{-}_{\kappa} = n- \dim Y$.
  From \autoref{thm:kap-defby-shr}, we have
  $\kappa = \sigma_{X,\gamma}$.
  Since $\gamma$ is separable, \autoref{rem_shrinking_map} implies
  $\sigma_{X,\gamma} =\sigma_Y \circ  \gamma$; hence
  $\kappa = \sigma_Y \circ  \gamma$ and $n^{-}_{\kappa} = n^{-}_{\sigma_Y} = n^{-}$ hold.
  Thus we have $n^-=n- \dim Y$.

  Set $X' = \Gamma(X)$.
  Since $\pr_1 : X' \twoheadrightarrow Y$ is separable,
  $\sigma_{X',\pr_1} = \sigma_{Y} \circ \pr_1 $ holds.
  By \autoref{thm:GaussImage},
  it holds that $X' \subset \sigma_Y^{*} \calu_{\G(n^-,\P^N)}$.
  Since 
  \[
  \dim \sigma_Y^{*} \calu_{\G(n^-,\P^N)} = \dim Y + n^- = n= \dim X',
  \]
  $X'$ coincides with $\sigma_Y^{*} \calu_{\G(n^-,\P^N)}$.

  (3) $ \Rightarrow$ (2):
  Set $X' = \sigma_Y^{*} \calu_{\G(n^-,\P^N)}$.
  By assumption, $X' \rightarrow X$ is separable and generically finite.
  In particular, we have $\dim X' = n$.
  Since $\pr_1 : X' \arw Y$ is a projective bundle,
  it is separable; hence $\sigma_{X',\pr_1} =\sigma_Y \circ \pr_1$ holds and then the condition (iii) of \autoref{thm:GaussImage} satisfied.
  Thus \autoref{thm:GaussImage} implies (2).

  The implications (2) $\Rightarrow$ (1) and (2) $\Rightarrow$ (3)
  follow immediately.
\end{proof}

\end{document}